\tikzset{arrow/.style={-stealth}}
\tikzset{arrowshorter/.style={-stealth, shorten <=2pt, shorten >=2pt}}
\tikzset{arrowmuchshorter/.style={-stealth, shorten <=7pt, shorten >=6pt}}
\tikzset{mono/.style={>-stealth}} %this is the style for the horizontal arrows
\tikzset{epi/.style={-twotriang}} %this is the style for the vertical arrows
\tikzset{twoarrowlonger/.style={double,double distance=1.5pt,
shorten <=5pt,shorten >=6pt,
decoration={markings,mark=at position -4pt with {\arrow[scale=1.75]{>}}},
preaction={decorate}}} %this is the double arrow to highlight the squares in a double category, due to Claudia and Theo
\tikzset{twoarrow/.style={double,double distance=1.5pt,
shorten <=6pt,shorten >=7pt, 
decoration={markings,mark=at position -4pt%-1pt%-4pt 
with {\arrow[scale=1.75]{>}}},
preaction={decorate} %preaction %[scale=1.75]
}
}
\tikzset{%
    symbol/.style={%
        draw=none,
        every to/.append style={%
            edge node={node [sloped, allow upside down, auto=false]{$#1$}}}
    }
}
\tikzset{mapstikz/.style={-stealth, 
decoration={markings,mark=at position 0pt with {\arrow[scale=0.5]{|}}}, preaction={decorate}}}%this is the style for mapsto in tikz
\theoremstyle{plain}   % This is the default, anyway
\newtheorem{thm}{Theorem}[section] % numbered theorem
\let\c@thm\c@thm\makeatother
\newtheorem{cor}{Corollary}[section]
\let\c@cor\c@thm\makeatother
\newtheorem{lem}{Lemma}[section]
\let\c@lem\c@thm\makeatother
\newtheorem{prop}{Proposition}[section]
\let\c@prop\c@thm\makeatother
\let\c@claim\c@thm\makeatother
\let\c@conjecture\c@thm\makeatother
\newtheorem*{unnumberedtheorem}{Main Theorem}
\theoremstyle{definition}
\newtheorem{defn}{Definition}[section]
\let\c@defn\c@thm\makeatother
\let\c@const\c@thm\makeatother
\newtheorem{notn}{Notation}[section]
\let\c@notn\c@thm\makeatother
\theoremstyle{remark}
\newtheorem{rmk}{Remark}[section]
\let\c@rmk\c@thm\makeatother
\newtheorem{ex}{Example}[section]
\let\c@ex\c@thm\makeatother
\let\c@observation\c@thm\makeatother
\let\c@warning\c@thm\makeatother
\let\c@digression\c@thm\makeatother
\let\c@answ\c@thm\makeatother
\let\c@equation\c@thm
\numberwithin{equation}{section}
\newcommand{\newrefformat}[2]{}
\crefname{lem}{Lemma}{Lemmas}
\crefname{thm}{Theorem}{Theorems}
\crefname{defn}{Definition}{Definitions}
\crefname{notn}{Notation}{Notations}
\crefname{const}{Construction}{Constructions}
\crefname{prop}{Proposition}{Propositions}
\crefname{rmk}{Remark}{Remarks}
\crefname{cor}{Corollary}{Corollaries}
\crefname{equation}{Display}{Displays}
\crefname{ex}{Example}{Examples}
\crefname{thmalph}{Theorem}{Theorems}
\crefname{answ}{Answer}{Answers}
\newcommand{\cM}{\mathcal{M}}
\newcommand{\cS}{\mathcal{S}}
\newcommand{\set}{\cS\!\mathit{et}}
\newcommand{\msset}{m\mathit{s}\set}
\DeclareMathOperator{\id}{id}
\newcommand{\aamalg}[1]{\underset{#1}{{\amalg}}} %vo: for pushouts
\DeclareMathOperator{\op}{op}
\DeclareMathOperator{\pr}{pr}
\newcommand{\eqDelta}{\Delta[3]_{eq}}
\newcommand{\DeltaThree}[3]{\ifthenelse{\equal{#1}{}}{\ifthenelse{\equal{#2}{}}{\Delta[3_{#3}]}{\Delta[3_{#3}|#2]}}{\ifthenelse{\equal{#2}{}}{\Delta[#1|3_{#3}]}{\Delta[#1|3_{#3}|#2]}}}
\title{Gray tensor product and saturated $N$-complicial sets}
\author{Viktoriya Ozornova}
\address{Fakult\"at f\"ur Mathematik, Ruhr-Universit\"at Bochum, 44780 Bochum, Germany}
\email{viktoriya.ozornova@rub.de}
\author{Martina Rovelli}
\address{Mathematical Sciences Institute, Australian National University,
ACT 2601, Australia
}
\email{martina.rovelli@anu.edu.au}
\author{Dominic Verity}
\address{Centre of Australian Category Theory, Macquarie University, NSW 2109,
Australia
}
\email{dominic.verity@mq.edu.au}
\keywords{Gray tensor product, saturated $n$-complicial set, $(\infty,n)$-category}
\subjclass[2010]{55U35; 18G30; 18D05; 55U10}
\begin{document}

\maketitle

\begin{abstract}
    We show that the pretensor and tensor products of simplicial sets with marking are compatible with the homotopy theory of saturated $N$-complicial sets (which are a proposed model of $(\infty,N)$-categories), in the form of a Quillen bifunctor and a homotopical bifunctor, respectively.
\end{abstract}

\addtocontents{toc}{\protect\setcounter{tocdepth}{0}}

\section*{Overview}
Higher category theory is becoming increasingly important as a unifying language for various areas of mathematics, most notably for algebraic topology and algebraic geometry, where many relevant structures occur naturally as \emph{$(\infty,N)$-categories}, rather than strict $N$-categories. In this article, we are concerned with an $(\infty,N)$-categorical version of the \emph{Crans--Gray tensor product} \cite{GrayFormalCategoryTheory,CransOmega}, originally defined for strict $N$-categories in order to encode different flavors of lax natural transformations.

The model of $(\infty,N)$-categories that we consider, due to
the third-named author,
is that of \emph{saturated $N$-complicial sets}. A saturated $N$-complicial set is a simplicial set with marking satisfying extra conditions that guarantee that the marked simplices behave as higher equivalences.
In \cite{VerityComplicialAMS}, he constructed two pointset models of the Gray tensor product of simplicial sets with marking: the tensor $\otimes$ and the pretensor $\boxtimes$, homotopically equivalent but each with different valuable properties,
and showed that they are compatible with the homotopy theory of (non-saturated) $N$-complicial sets.

In this note, we provide the extra verification that enables us to conclude that the pretensor and the tensor products $\boxtimes$ and $\otimes$ are in fact also compatible with the model structure for \emph{saturated} $N$-complicial sets, in a sense that will be made precise by \cref{tensorhomotopical,pretensorbiquillen}.

\begin{unnumberedtheorem}
 For any $N\in\mathbb N$, the bifunctors $\boxtimes$ and $\otimes$ are homotopical with respect to the model structure on simplicial sets with marking for saturated $N$-complicial sets, which model $(\infty,N)$-categories.
\end{unnumberedtheorem}

The theorem was proven for $N=1$ by Joyal \cite[Thm 6.1]{JoyalVolumeII} in the context of quasi-categories and by Lurie \cite[Cor.~3.1.4.3]{htt} in the context of marked simplicial sets. During the final work on the completion of this paper, analogous result was shown for $N=2$ by Gagna--Harpaz--Lanari \cite{GagnaHarpazLanariGray} in the context of scaled simplicial sets.
For general $N$,
the result was previously obtained by the third-named author, and recently rediscovered by the first two authors.

Beside for its own interest, the result would play a role in work by Campion--Kapulkin--Maehara, in comparing cubical models of $(\infty,N)$-ca\-te\-go\-ries to saturated $N$-complicial sets, as indicated in \cite[Rmk~7.3, Conj.~7.4]{CampionKapulkinMaehara}.

\addtocontents{toc}{\protect\setcounter{tocdepth}{1}}
\subsection*{Acknowledgements}
  We would like to thank Emily Riehl for bringing the problem treated in this paper to the attention of the first two authors, and Len\-nart Meier for helpful conversations on this project.
{This material is based upon work supported by the National Science Foundation under Grant No.\ DMS-1440140 while the authors were in residence at the Mathematical Sciences Research Institute in Berkeley, California, during the Spring 2020 semester. The first-named author thankfully acknowledges the financial support by the DFG grant OZ 91/2-1 with the project nr.~442418934. The third-named author was supported by the Discovery Project DP190102432 from the Australian Research Council.}
\tableofcontents
\addtocontents{toc}{\protect\setcounter{tocdepth}{2}}
\section{Background on simplicial sets with marking}

We recall in this section the background material on simplicial sets with marking, saturated complicial sets, and on the pretensor and tensor product, $\boxtimes$ and $\otimes$.

\begin{defn}
A \emph{simplicial set with marking}\footnote{This notion is the same as \emph{stratified simplicial set} in the sense of Verity \cite{VerityComplicialAMS}, and is different from (but related to) \emph{marked simplicial set} in the sense of Lurie \cite{htt}.} is a simplicial set with a designated subset of \emph{marked} or \emph{thin} positive-dimensional simplices that includes all degenerate simplices. A \emph{map of simplicial sets with marking} is a simplicial map that preserves the marking. We denote by $\msset$ the category of simplicial sets with marking and maps of simplicial sets with marking.
\end{defn}

\subsection{The model structures on simplicial sets with marking}

The following notational conventions will be used to define saturated $N$-complicial sets and to describe the model structure for $N$-complicial sets on $\msset$. The material is mostly drawn from \cite[\textsection\textsection 2.1-2.2]{VerityComplicialI}, \cite{EmilyNotes} and \cite[\textsection 1]{or}, and we refer the reader to these references for a more detailed account.

\begin{notn}
\label{preliminarynotation}
We denote
\begin{itemize}[leftmargin=*]
\item by $\Delta[-1]$ the empty simplicial set.
    \item by $\Delta[m]$ the simplicial set with marking whose underlying simplicial set is $\Delta[m]$ and in which only degenerate simplices are marked.
        \item by $\partial\Delta[m]$ the simplicial set with marking whose underlying simplicial set is $\partial\Delta[m]$ and in which only degenerate simplices are marked.
    \item by $\Delta[m]_{t}$ the simplicial set with marking whose underlying simplicial set is $\Delta[m]$ and in which only degenerate simplices and the top $m$-simplex are marked.
    \item by $\Delta^k[m]$, for $0\leq k \leq m$, the simplicial set with marking whose underlying simplicial set is $\Delta[m]$ and in which a non-degenerate simplex is marked if and only if it contains the vertices $\{k-1,k,k+1\}\cap [m]$.
    \item by $\Delta^k[m]'$, for $0\leq k \leq m$, the simplicial set with marking obtained from $\Delta^k[m]$ by additionally marking the $(k-1)$-st and $(k+1)$-st face of $\Delta[m]$.
    \item by $\Delta^k[m]''$, for $0\leq k \leq m$, the simplicial set with marking obtained from $\Delta^k[m]'$ by additionally marking the $k$-th face of $\Delta[m]$.
    \item by $\Lambda^k[m]$, for $0\leq k \leq m$, the simplicial set with marking whose underlying simplicial set is the $k$-horn $\Lambda^k[m]$ and whose simplex is marked if and only if it is marked in $\Delta^k[m]$. 
    \item by $\eqDelta$ the simplicial set with marking whose underlying simplicial set is $\Delta[3]$, and the non-degenerate marked simplices consist of all $2$- and $3$-simplices, as well as $1$-simplices $[02]$ and $[13]$.
        \item by $\Delta[3]_\sharp$ the simplicial set with marking whose underlying simplicial set is $\Delta[3]$, and all simplices in positive dimensions are marked.
           \item by
$\DeltaThree{\ell'}{\ell}{eq}$, for $\ell,\ell'\ge-1$, the simplicial set with marking $\Delta[\ell']\star\eqDelta\star\Delta[\ell]$.
    \item by
$\DeltaThree{\ell'}{\ell}{\sharp}$, for $\ell,\ell'\ge-1$, the simplicial set with marking $\Delta[\ell']\star\Delta[3]_{\sharp}\star\Delta[\ell]$.
\end{itemize}
\end{notn}

Here, $\star$ denotes the join of simplicial sets with marking, which can be found in \cite[Observation 34]{VerityComplicialI} or \cite[Def. 3.2.5]{EmilyNotes}, and which we recall for the reader's convenience.

\begin{defn}
Given simplicial sets with marking $X$ and $Y$, the \emph{join} $X\star Y$ is a simplicial set with marking whose underlying simplicial set is the join of the underlying simplicial sets, and in which an $r$-simplex $x\star y\colon\Delta[k]\star\Delta[r-k-1]\to X\star Y$ for $-1\le k\le r$ is marked if and only if the simplex $x$ is marked in $X$ or the simplex $y$ is marked in $Y$ (or both).
\end{defn}

\begin{defn}
\label{anodynemaps}
For $N\in\mathbb N\cup\{\infty\}$, an \emph{elementary $(\infty,N)$-anodyne extension} is one of the following.
 \begin{enumerate}[leftmargin=*]
  \item  The \emph{complicial horn extension}, i.e., the canonical map
 $$\Lambda^k[m]\to \Delta^k[m]\text{ for $m\geq 1$ and $0\leq k\leq m$},$$
 which is the ordinary horn inclusion on the underlying simplicial sets.
 \item[(1')] The \emph{complicial thinness extension}, i.e., the canonical map  
$$\Delta^k[m]' \to \Delta^k[m]''\text{ for $m\geq 2$ and $0\leq k \leq m$},$$
which is the identity on the underlying simplicial set.
\item The \emph{left saturation extension}, i.e., the canonical map
$$\DeltaThree{\ell}{}{eq}  \to \DeltaThree{\ell}{}{\sharp} \text{ for $\ell\geq -1$},$$
which is the identity on the underlying simplicial set.
\item The \emph{triviality extension} map,
i.e., the canonical map
$$\Delta[p]\to \Delta[p]_t\text{ for $p>N$},$$
which is the identity on the underlying simplicial set.
\end{enumerate}
\end{defn}

\begin{rmk}
We point out that the parameter $N$ only plays a role in the triviality anodyne extension in (3). In particular, complicial horn extensions, thinness extensions and saturation anodyne extensions are $(\infty,N)$-anodyne for every $N\in\mathbb N\cup\{\infty\}$.
\end{rmk}

\begin{defn}
Let $X$ be a simplicial set with marking, and $N\in\mathbb N\cup\{\infty\}$.
\begin{enumerate}[leftmargin=*]
    \item $X$ is a \emph{complicial set}, also called a \emph{weak complicial set}, if it has the right lifting property with respect to the complicial horn anodyne extensions $\Lambda^k[m]\to \Delta^k[m]$ and the thinness anodyne extensions $\Delta^k[m]' \to \Delta^k[m]''$ for $m\geq 1$ and $0\leq k\leq m$.
    \item $X$ is a \emph{saturated complicial set} if it is a complicial set and it has the right lifting property with respect to the left  saturation anodyne extensions $\DeltaThree{\ell}{}{eq}  \to \DeltaThree{\ell}{}{\sharp}$ for $\ell\geq -1$.
    \item $X$ is a \emph{saturated $N$-complicial set} if it is a saturated complicial set and it has the right lifting property with respect to the triviality anodyne extensions $\Delta[p]\to \Delta[p]_t$ for $p>N$.
\end{enumerate}
\end{defn}

For any $N\in\mathbb N$, saturated $N$-complicial sets are a proposed model for $(\infty,N)$-categories\footnote{The case $N=\infty$ is subtle, since there are at least two different viewpoints on what an $(\infty,\infty)$-category should be.}, and we refer the reader to \cite{VerityComplicialAMS,EmilyNotes,or} for a description of the intuition behind this combinatorics.

Roughly speaking, according to the intuition that the $r$-simplices of a simplicial set with marking represent $r$-morphisms and that the marked simplices represent $r$-equivalences, we can rephrase as follows.
\begin{enumerate}[leftmargin=*]
    \item In a complicial set $r$-morphisms can be composed, and composite of $r$-equivalences is an $r$-equivalence.
    \item In a saturated complicial set $r$-equivalences satisfy the two-out-of-six property.
    \item In a saturated $N$-complicial set all $r$-morphisms are equivalences in dimension $r>N$.
\end{enumerate}

There is a model structure on $\msset$ for saturated $N$-complicial sets.

\begin{thm}[{\cite{VerityComplicialAMS,EmilyNotes,or}}]
Let $N\in\mathbb N\cup\{\infty\}$. There is a cofibrantly generated model structure on $\msset$ in which
\begin{itemize}[leftmargin=*]
    \item the cofibrations are precisely the monomorphisms;
    \item the fibrant objects are precisely the saturated $N$-complicial sets;
    \item all elementary anodyne extensions are acyclic cofibrations.
\end{itemize}
We call this model structure the model structure for $(\infty,N)$-categories, or the model structure for saturated $N$-complicial sets, we denote it by $\msset_{(\infty,N)}$, and we call the acyclic cofibrations $(\infty,N)$-acyclic cofibrations.
\end{thm}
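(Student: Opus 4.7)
My plan is to construct this model structure in two stages: first producing a model structure for weak complicial sets on $\msset$ via Cisinski's machinery, and then left Bousfield localizing to impose saturation and $N$-truncation. For the first stage, I would take as generating cofibrations the boundary inclusions $\partial\Delta[m] \to \Delta[m]$ together with the re-marking inclusions $\Delta[m] \to \Delta[m]_t$ for $m \geq 0$; under transfinite composition, pushout, and retract these generate all monomorphisms of $\msset$. The generating acyclic cofibrations at this stage are the elementary complicial horn extensions and the thinness extensions of \cref{anodynemaps}(1) and (1'). Cisinski's theory of homotopical data on presheaf categories, or direct verification following Verity \cite{VerityComplicialI}, produces a cofibrantly generated model structure whose fibrant objects are precisely the weak complicial sets.

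Next, I would left Bousfield localize this model structure at the set $S$ consisting of the left saturation extensions $\DeltaThree{\ell}{}{eq} \to \DeltaThree{\ell}{}{\sharp}$ for $\ell \geq -1$ and the triviality extensions $\Delta[p] \to \Delta[p]_t$ for $p > N$. Since $\msset$ is locally presentable and the starting model structure is combinatorial and left proper (all objects being cofibrant), Smith's recognition theorem yields the desired localized model structure. The cofibrations remain the monomorphisms by construction, and the elementary anodyne extensions of all four types of \cref{anodynemaps} are acyclic cofibrations: those of types (1) and (1') survive from the starting model structure, and those of types (2) and (3) are monomorphisms in $S$ and hence acyclic cofibrations in the localization. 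The remaining task is to identify the fibrant objects, which amounts to showing that an $S$-local weak complicial set is precisely a saturated $N$-complicial set.

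The hard part is exactly this last identification. A priori, being $S$-local only requires that $\Map(-,X)$ send maps in $S$ to equivalences of simplicial sets, whereas being saturated and $N$-truncated demands strict right lifting against $S$. To bridge this, I would aim to exhibit each map in $S$ as a strong deformation retract with respect to an appropriate cylinder object in $\msset$, so that any homotopy lift into a weak complicial set can be rectified to a strict one. This rectification is delicate: it must simultaneously handle the saturation extensions at all parameters $\ell \geq -1$ and the triviality extensions at all $p > N$, and it must respect the interaction between thinness, saturation, and truncation. Explicit combinatorial arguments of this flavor, together with the cylinder construction needed to make them work, appear in \cite{VerityComplicialAMS} and \cite{or}, and I would expect to follow their overall strategy while attempting to streamline the case analysis.
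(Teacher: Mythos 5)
The paper does not prove this theorem; it is imported as background with citations to Verity, Riehl, and Ozornova--Rovelli, so there is no in-paper argument to compare against. Your two-stage plan (first the weak complicial model structure via Cisinski-type machinery, then a left Bousfield localization) is a legitimate alternative route to the one actually taken in those references, which instead build the full anodyne class --- including the saturation and triviality extensions --- into Verity's Cisinski-style framework from the outset, so that ``fibrant object equals anodyne-injective'' is delivered by the general theorem rather than checked afterwards.

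However, there is a genuine gap in the step you flagged as the hard part, and the fix you propose does not work. You suggest exhibiting each map in $S$ as a strong deformation retract so that a homotopy lift can be rectified to a strict one. But the saturation extensions $\DeltaThree{\ell}{}{eq} \to \DeltaThree{\ell}{}{\sharp}$ and the triviality extensions $\Delta[p]\to\Delta[p]_t$ are \emph{entire}: they are identities on the underlying simplicial set, with the codomain carrying strictly more markings. Any retraction would have to be the identity on underlying simplicial sets, yet would then fail to preserve the marking; so these maps admit no retraction at all, let alone a strong deformation retraction. Consequently the proposed rectification mechanism cannot get off the ground. What actually closes the gap in the literature is verifying that the elementary anodyne extensions form a class closed under pushout-tensor (Leibniz product) with the generating cofibrations --- precisely the kind of combinatorial statements recorded in \cref{pushoutpretensoranodynemarking}, \cref{pushoutpretensoranodyneboundary}, and \cref{pushouttensorsaturation} of this paper. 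Once such closure is in hand, the relevant Cisinski/Smith recognition theorem applies with the full elementary anodyne set as generating acyclic cofibrations, and the identification of fibrant objects with saturated $N$-complicial sets is automatic. If you want to keep your localization framing, you would need to prove that closure property to conclude that an object with right lifting against the elementary anodynes is already $S$-local; the strong-deformation-retract shortcut is not available.
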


\begin{rmk}
As discussed in \cite[Example 21]{VerityComplicialI},
the generating cofibrations for the model structure for $(\infty,N)$-categories are \begin{itemize}[leftmargin=*]
    \item the \emph{boundary inclusions}
\[\partial\Delta[m]\to\Delta[m]\text{ for }m\ge0,\]
\item and the \emph{marking inclusions}
\[\Delta[m]\to\Delta[m]_t\text{ for }m\ge1.\]
\end{itemize}
\end{rmk}

We mentioned that, by construction, all left saturation extensions $\DeltaThree{\ell}{}{eq}  \to \DeltaThree{\ell}{}{\sharp}$ for $\ell\geq -1$ are acyclic cofibrations. In fact, even the saturation extensions of the more general form $\DeltaThree{\ell'}{\ell}{eq}  \to \DeltaThree{\ell'}{\ell}{\sharp}$ for $\ell,\ell'\geq -1$ are acyclic cofibrations.

\begin{lem}
\label{generalsaturation}
The \emph{saturation extension} \[\DeltaThree{\ell'}{\ell}{eq}  \to \DeltaThree{\ell'}{\ell}{\sharp}\quad\text{ for }\quad\ell,\ell'\geq -1\] is acyclic cofibration.
\end{lem}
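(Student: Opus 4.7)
The strategy is to reduce to the left saturation extension (an elementary anodyne extension by definition) by showing that joining with a fixed simplicial set with marking on the right preserves acyclic cofibrations.

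By associativity of the join, the map $\DeltaThree{\ell'}{\ell}{eq} \to \DeltaThree{\ell'}{\ell}{\sharp}$ is the image of the left saturation extension $\DeltaThree{\ell'}{}{eq} \to \DeltaThree{\ell'}{}{\sharp}$ under the endofunctor $-\star\Delta[\ell]\colon\msset\to\msset$, with the convention $\Delta[-1]=\emptyset$ handling the case $\ell=-1$ trivially (the join with the empty simplicial set being the identity). It therefore suffices to show that the functor $-\star\Delta[\ell]$ preserves acyclic cofibrations.

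This in turn will follow from the Leibniz join property of $\star$: for any acyclic cofibration $f\colon A\to B$ and any cofibration $g\colon C\to D$, the induced Leibniz join $f\hat{\star}g\colon (A\star D)\cup_{A\star C}(B\star C)\to B\star D$ is an acyclic cofibration. Applying this with $g$ the cofibration $\emptyset\hookrightarrow\Delta[\ell]$, and using that $A\star\emptyset=A$ and $B\star\emptyset=B$, one obtains the factorization
\[
A\star\Delta[\ell]\;\longrightarrow\;(A\star\Delta[\ell])\cup_{A} B\;\longrightarrow\;B\star\Delta[\ell],
\]
in which the first arrow is the pushout of the acyclic cofibration $f\colon A\to B$ along $A\to A\star\Delta[\ell]$ (hence itself an acyclic cofibration) and the second arrow is $f\hat{\star}g$. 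The composite is the desired map $\DeltaThree{\ell'}{\ell}{eq}\to\DeltaThree{\ell'}{\ell}{\sharp}$, exhibited as a composite of two acyclic cofibrations.

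The main obstacle is justifying the Leibniz join property in the saturated $N$-complicial setting. For the complicial horn, thinness, and triviality anodyne extensions, this is covered by the Quillen bifunctor structure on $\star$ established for weak complicial sets in Verity's work \cite{VerityComplicialAMS}. The new verification required is that the Leibniz join of the left saturation extension with each of the generating cofibrations $\partial\Delta[m]\hookrightarrow\Delta[m]$ and $\Delta[m]\hookrightarrow\Delta[m]_t$ is an acyclic cofibration; this should reduce to a finite combinatorial analysis in which the resulting map is built, via pushouts and transfinite compositions, from left saturation extensions applied to appropriate sub-joins.
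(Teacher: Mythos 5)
Your reduction to the left saturation extension via $-\star\Delta[\ell]$ is clean and correct as far as it goes, but the proof has a genuine gap exactly at the place you flag as "the new verification required." You observe, correctly, that what you need is that the Leibniz join of the left saturation extension with the generating cofibrations $\partial\Delta[m]\hookrightarrow\Delta[m]$ and $\Delta[m]\hookrightarrow\Delta[m]_t$ is an acyclic cofibration for the saturated model structure, and you then assert that this ``should reduce to a finite combinatorial analysis in which the resulting map is built, via pushouts and transfinite compositions, from left saturation extensions.'' That assertion is the entire content of the claim; it is not established, and it is not obvious. Indeed, the general saturation extensions $\DeltaThree{\ell'}{\ell}{eq}\to\DeltaThree{\ell'}{\ell}{\sharp}$ are precisely what one obtains by applying $-\star\Delta[\ell]$ to the left saturation extensions, so ``show $-\star\Delta[\ell]$ preserves the new acyclic cofibrations'' is not a reduction of the statement to something simpler --- it is essentially a restatement of it, packaged as a Quillen-bifunctor claim that is strictly stronger than the lemma. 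Carrying out the combinatorics you allude to would be work comparable to the analysis the paper does elsewhere (cf.\ the proof of \cref{pushouttensorsaturation}), and there is no reason to expect the cells involved to be only saturation extensions rather than a mixture with thinness extensions.

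The paper's own argument bypasses all of this. It cites \cite[\textsection D.7]{RiehlVerityBook} for the fact that the general saturation extensions $\DeltaThree{\ell'}{\ell}{eq}\to\DeltaThree{\ell'}{\ell}{\sharp}$ already lift against every saturated $N$-complicial set. Because these maps are \emph{entire} --- identities on the underlying simplicial set --- lifting against fibrant objects automatically upgrades to lifting against all fibrations between fibrant objects: the underlying simplicial map of any candidate lift is forced, so one only needs to check marking preservation, which follows from the extension to the fibrant target. Then the standard recognition criterion \cite[Lemma 7.14]{JT} (a cofibration with the left lifting property against fibrations between fibrant objects is an acyclic cofibration) finishes the proof. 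No join-compatibility or pushout-product analysis is needed. If you want to salvage your approach, you would have to actually prove the Leibniz join claim for the saturation cells, which, as noted, is both harder and more than the lemma requires; the entireness argument the paper uses is the decisive shortcut you are missing.
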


\begin{proof}
The saturation extensions $\DeltaThree{\ell'}{\ell}{eq}  \to \DeltaThree{\ell'}{\ell}{\sharp}$ have the left lifting property with respect to all saturated $N$-complicial sets, as shown \cite[\textsection D.7]{RiehlVerityBook}, and since they are isomorphisms on the underlying simplicial sets they must also have the right lifting property with respect to all fibrations between saturated $N$-complicial sets. We then conclude that they are acyclic cofibrations as an instance of \cite[Lemma 7.14]{JT}.
\end{proof}

\begin{prop}[{\cite[Lemma~1.8]{FundamentalPushouts}}]
\label{leftQuillen}
Let $\cM$ be a model category.
A left adjoint functor $F\colon\msset_{(\infty,N)}\to\cM$ is left Quillen if and only if it respects cofibrations and sends
all elementary anodyne extensions to weak equivalences
of $\cM$.
\end{prop}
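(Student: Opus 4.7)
The plan is to treat the two implications separately. The ``only if'' direction is immediate from the definition of a left Quillen functor: such $F$ preserves acyclic cofibrations, and every elementary anodyne extension is an acyclic cofibration by construction of the model structure on $\msset_{(\infty,N)}$.

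For the converse, I would assume that $F$ preserves cofibrations and sends each elementary anodyne extension to a weak equivalence of $\cM$. First, I would observe that the elementary anodyne extensions of \cref{anodynemaps} are themselves cofibrations (being monomorphisms on underlying simplicial sets), so since $F$ preserves cofibrations, its value on any such map is both a cofibration and a weak equivalence, hence an acyclic cofibration in $\cM$. I would then consider the class $S$ of morphisms of $\msset_{(\infty,N)}$ whose $F$-image is an acyclic cofibration in $\cM$. Since $F$ is a left adjoint it preserves pushouts and transfinite compositions, and since the class of acyclic cofibrations in any model category is closed under pushouts along arbitrary maps, transfinite compositions, and retracts, the class $S$ is weakly saturated in $\msset_{(\infty,N)}$ and contains all elementary anodyne extensions.

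The remaining step is to conclude that this weakly saturated closure exhausts the class of all acyclic cofibrations of $\msset_{(\infty,N)}$. Granting this, every acyclic cofibration lies in $S$ and is therefore sent by $F$ to an acyclic cofibration, so in particular to a weak equivalence, and $F$ is left Quillen. The main obstacle is precisely this last identification, which rests on the explicit cofibrant generation of $\msset_{(\infty,N)}$: one must exhibit a set of generating acyclic cofibrations lying in the weakly saturated closure of the elementary anodyne extensions. This is the content of \cite[Lemma~1.8]{FundamentalPushouts}, whose conclusion may therefore be cited directly to complete the proof.
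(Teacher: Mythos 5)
The ``only if'' direction is correct and immediate. The problem is with the ``if'' direction, which is structurally circular as written: after correctly observing that the class $S$ of maps sent by $F$ to acyclic cofibrations is weakly saturated and contains the elementary anodyne extensions, you reduce the argument to showing that the weakly saturated closure of the elementary anodyne extensions contains a set of generating acyclic cofibrations --- and then cite \cite[Lemma~1.8]{FundamentalPushouts} for this. But that citation is precisely the proposition you have been asked to prove, so nothing has actually been established. Moreover, the fact you appeal to is not the right target: in Cisinski/Olschok-style constructions such as the one producing $\msset_{(\infty,N)}$, the elementary anodyne extensions are chosen to detect fibrant objects, and their weakly saturated closure is in general a proper subclass of the acyclic cofibrations; there is no reason to expect a set of generating acyclic cofibrations to lie in it.

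The paper itself does not spell out a proof (it simply cites the lemma), but the route taken in \cite{FundamentalPushouts} and signalled elsewhere in the text is different and avoids the issue above. The essential tool is a detection lemma --- the same \cite[Lemma~7.14]{JT} invoked in the proof of \cref{generalsaturation} --- asserting that a cofibration is acyclic as soon as it has the left lifting property against all fibrations between fibrant objects. One then argues as follows: since $F$ preserves cofibrations and carries each elementary anodyne extension to an acyclic cofibration of $\cM$, its right adjoint $G$ carries fibrant objects of $\cM$ to simplicial sets with marking having the RLP against the elementary anodynes, i.e.\ to fibrant objects of $\msset_{(\infty,N)}$, and similarly carries fibrations between fibrant objects to maps with the RLP against the elementary anodynes between such objects; by the construction of the model structure those are exactly the fibrations between fibrant objects. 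Now for any acyclic cofibration $i$ of $\msset_{(\infty,N)}$ and any fibration $p$ between fibrant objects of $\cM$, a lifting problem for $F(i)$ against $p$ transposes to one for $i$ against $G(p)$, which is solvable because $i$ is an acyclic cofibration. Hence $F(i)$ is a cofibration with LLP against all fibrations between fibrant objects, and the detection lemma shows it is acyclic. This is the missing content, and it cannot be replaced by the ``saturation of anodynes equals acyclic cofibrations'' claim you lean on.
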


\subsection{Pretensor and tensor product of simplicial sets with marking}
Inspired\footnote{
In \cite[Observation 62]{VerityComplicialI},
Verity states the relationship between the
Crans--Gray tensor product of $\omega$-categories and 
the tensor product of simplicial sets with marking, using the fact that $\omega$-categories (in the form of strict complicial sets) form a reflective subcategory of simplicial sets with marking. Given two $\omega$-categories, their Crans--Gray tensor product can be obtained by reflecting their tensor product as simplicial sets with marking.
}
by the Crans--Gray tensor product of $\omega$-categories from \cite{GrayFormalCategoryTheory, CransOmega}, which can be thought as strict $\infty$-categories, 
Verity defined two models of Gray tensor products of simplicial sets with marking: the pretensor $\boxtimes$ and the tensor $\otimes$. In this paper, we will work with the definition of the tensor product $\otimes$, while the pretensor product $\boxtimes$ plays a more indirect role. For completeness, we recall both definitions.

\begin{notn}[{\cite[Notation 5]{VerityComplicialAMS}}]
For any $p,q\ge0$, 
\begin{itemize}[leftmargin=*]
    \item the \emph{degeneracy partition operator} is the map in $\Delta$
\[\Pi_1^{p,q}\colon[p+q]\to[p]\quad\text{ and }\quad\Pi_2^{p,q}\colon[p+q]\to[q]\]
defined by
\[i\mapsto\left\{
\begin{array}{cc}
    i & \text{ if } i\leq p\\
    p & \text{ if } i>p
\end{array}\right.\quad\text{ and }\quad
i\mapsto\left\{
\begin{array}{cc}
    0 & \text{ if } i<p\\
    i-p & \text{ if }i\geq p
\end{array}\right.\]
\item the \emph{face partition operator} is the map in $\Delta$
\[\amalg_1^{p,q}\colon[p]\to[p+q]\quad\text{ and }\quad\amalg_2^{p,q}\colon[q]\to[p+q]\]
defined by
\[i\mapsto i\quad\text{ and }\quad
i\mapsto p+i.\]
\end{itemize}
\end{notn}

\begin{rmk}
As explained in \cite[\textsection 1.6]{VerityComplicialAMS}, any non-degenerate $r$-simplex of $\Delta[r]\to\Delta[p]\times \Delta[q]$ can be pictured as a path of length $r$ in a rectangular grid of size $p\times q$. According to this interpretation, the $(p+q)$-simplex given by $(\Pi_1^{p,q}, \Pi_2^{p,q})\colon\Delta[p+q]\to\Delta[p]\times\Delta[q]$ is the path with \lq\lq{}first all to the right, then all up\rq\rq{}, as shown in the following picture for $p=3$ and $q=2$.
\begin{center}
    \begin{tikzpicture}
    \def\l{1cm}
\foreach \x in {0,1,2,3}
\foreach \y in {0,1,2}
    \draw[fill] (\x*\l, \y*\l) circle (1pt) node (a\x\y){};
 \draw (a00)--(a10);   
 \draw (a10)--(a20);
 \draw (a20)--(a30);
 \draw (a30)--(a31);
 \draw (a31)--(a32);
\end{tikzpicture}
\end{center}

\end{rmk}

\begin{defn}[{\cite[Def.~135]{VerityComplicialI}}]
Given simplicial sets with marking $X$ and $Y$, the \emph{pretensor} $X\boxtimes Y$ is formed by taking the product of
underlying simplicial sets and endowing it with a marking under which a non-degenerate $r$-simplex $(x, y) \colon\Delta[r]\to
X \times Y$ is marked if either
\begin{itemize}[leftmargin=*]
    \item it is a \emph{mediator}, i.e., there exists $0 <k<r$ and $(r-1)$-simplices $x'\colon\Delta[r-1]\to X$ and $y'\colon\Delta[r-1]\to Y$ such that $x = s_{k-1}x'=x'\circ s^{k-1}$ and $y = s_ky'=y'\circ s^k$.
    \item it is a \emph{crushed cylinder}, i.e., there exists a partition $p, q$ of $r=p+q$ and simplices $x'\colon\Delta[p]\to X$ and $y'\colon\Delta[q]\to Y$ such that
$x = x'\circ\Pi^{p,q}_1$ and $y = y'\circ\Pi^{p,q}_2$, and
either the simplex $x'$ is marked in $X$ or the simplex $y'$ is marked in $Y$ (or both).
\end{itemize}
\end{defn}

It is proven in \cite[Lemma 142]{VerityComplicialI} that $\boxtimes$ is a bifunctor that preserves colimits in each variable. We then obtain the following adjunctions. Regarding the terminology of lax and oplax, we follow the same convention as e.g.\ \cite{LackIcons, AraLucas}. 

\begin{prop}[{\cite[Cor.~144]{VerityComplicialAMS}}]
For any simplicial set with marking $S$ there are adjunctions
\[-\boxtimes S\colon\msset\rightleftarrows\msset\colon[S,-]_{oplax}\]
and
\[S\boxtimes -\colon\msset_{(\infty,N)}\rightleftarrows\msset_{(\infty,N)}\colon[S,-]_{lax}.\]
\end{prop}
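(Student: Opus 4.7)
The plan is to invoke the adjoint functor theorem for locally presentable categories. The category $\msset$ is locally presentable; concretely it can be realised as a presheaf category on the small category obtained from $\Delta$ by adjoining, for each $m \geq 1$, an additional object classifying a marked $m$-simplex, together with the evident relations involving face and degeneracy maps. By \cite[Lemma 142]{VerityComplicialI}, the pretensor bifunctor $\boxtimes$ preserves small colimits separately in each variable. Thus for any fixed simplicial set with marking $S$, the endofunctors $-\boxtimes S$ and $S\boxtimes -$ of $\msset$ are both cocontinuous functors between locally presentable categories, and each therefore admits a right adjoint by the adjoint functor theorem. We name these right adjoints $[S,-]_{oplax}$ and $[S,-]_{lax}$ respectively.

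The two right adjoints are in general distinct because $\boxtimes$ is not symmetric: the mediator condition involves the degeneracy $s_{k-1}$ on the first factor but $s_k$ on the second, so there is no canonical isomorphism $X\boxtimes S \cong S \boxtimes X$. This asymmetry is precisely what is recorded by the lax versus oplax nomenclature, mirroring the two flavours of lax natural transformation between higher functors. The subscript $(\infty,N)$ decorating the second adjunction is merely an annotation of an ambient model structure; the adjunction itself lives at the purely categorical level, independent of any homotopy-theoretic data. Compatibility with the model structure $\msset_{(\infty,N)}$ is precisely the substantive content of the main theorems this paper is devoted to proving, rather than part of the present statement.

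The main obstacle here is essentially nil: once the bifunctoriality and variablewise colimit preservation of $\boxtimes$ are granted, the existence of the two right adjoints is a formal consequence of the adjoint functor theorem. Genuine effort would only be required if one wanted explicit pointset descriptions of $[S,-]_{oplax}$ and $[S,-]_{lax}$, since those would have to mirror the asymmetric combinatorics of mediators and crushed cylinders; neither the existence assertion nor its proof needs such descriptions.
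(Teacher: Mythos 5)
Your proof is correct and takes essentially the same approach as the paper, which simply cites Lemma~142 of \cite{VerityComplicialI} for separate cocontinuity of $\boxtimes$ and then ``obtains'' the adjunctions; the citation to Cor.~144 of \cite{VerityComplicialAMS} supplies an explicit pointset description of the right adjoints, but your abstract existence argument via the adjoint functor theorem for locally presentable categories establishes the same statement. One minor factual slip worth correcting: $\msset$ is \emph{not} a presheaf category, since the requirement that the marked simplices form a genuine subset of all simplices is not expressible as a presheaf condition --- indeed the functor sending $(X,tX)$ to $tX$ fails to preserve colimits (e.g.\ a pushout identifying two distinct marked points along an unmarked span produces one marked point, not two), so the would-be representable classifying a marked simplex is not tiny. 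The category $\msset$ is, however, an accessible reflective subcategory of the presheaf category on $t\Delta$ and hence locally presentable, which is all the adjoint functor theorem needs, so your conclusion stands; your observation that the model-structure subscripts are decoration on a purely categorical fact is also correct.
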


However, the pretensor $\boxtimes$ is not associative, so it cannot be used to build a monoidal structure on $\msset$. For this purpose, one can instead consider the tensor product $\otimes$ (which however does not preserve colimits).

\begin{defn}[{\cite[Def.~128]{VerityComplicialAMS}}]
Given simplicial sets with marking $X$ and $Y$, the \emph{tensor} $X\otimes Y$ is formed by taking the product of
underlying simplicial sets and endowing it with a marking under which a non-degenerate $r$-simplex $(x, y)\colon\Delta[r]\to
X \times Y$ is marked if for each $p,q\ge0$ the partition $r=p+q$ \emph{cleaves} the simplex $(x,y)$, i.e., the $p$-simplex
$x \circ\amalg^{p,q}_1$ is marked in $X$ or the $q$-simplex $ y\circ\amalg^{p,q}_2$ is marked in $Y$.
\end{defn}

Pretensor and tensor are equivalent in the following sense.

\begin{prop}[{\cite[Lemma 149]{VerityComplicialAMS}}]
\label{EquivalentTensors}
For any simplicial sets with marking $X$ and $Y$ the canonical inclusion
\[X\boxtimes Y\hookrightarrow X\otimes Y\]
is an $(\infty,N)$-acyclic cofibration
for any $N\in\mathbb N\cup\{\infty\}$. In particular there is an objectwise weak equivalence
\[-\boxtimes -\simeq-\otimes -\colon\msset_{(\infty,N)}\times\msset_{(\infty,N)}\to\msset_{(\infty,N)}.\]
\end{prop}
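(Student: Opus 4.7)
The plan is to exhibit the inclusion $X\boxtimes Y \hookrightarrow X\otimes Y$ as an $(\infty,N)$-acyclic cofibration; the weak equivalence of bifunctors in the ``in particular'' clause is then immediate from the fact that each evaluation is connected by such a map.

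First, I would verify that the inclusion is a monomorphism, and hence a cofibration. Since the two sides share the same underlying simplicial set, this reduces to checking that every mediator and every crushed cylinder in $X\boxtimes Y$ is cleaved by every partition $(p,q)$ of $r$. For a mediator with parameter $k$, where $x(k-1)=x(k)$ and $y(k)=y(k+1)$, one argues by cases: when $p\ge k$, the restriction $x\circ\amalg_1^{p,q}$ is degenerate at position $k-1$; when $1\le p<k$, the restriction $y\circ\amalg_2^{p,q}$ is degenerate at position $k-p$; and the extreme partitions $(0,r)$ and $(r,0)$ are covered by the observation that $x$ and $y$ are themselves degenerate as $r$-simplices. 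For a crushed cylinder the argument is analogous, using the defining partition $(p_0,q_0)$ and the fact that one of $x'$ or $y'$ is marked, with the partitions $p<p_0$, $p>p_0$, and $p=p_0$ handled separately.

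For the acyclicity, I would appeal to Verity's combinatorial argument \cite[Lemma 149]{VerityComplicialAMS}, which realises this inclusion as a transfinite composition of pushouts of complicial horn extensions and thinness extensions, i.e., elementary anodyne extensions of types (1) and (1') in \cref{anodynemaps}. Since these are $(\infty,N)$-acyclic cofibrations for every $N\in\mathbb N\cup\{\infty\}$, and since the class of $(\infty,N)$-acyclic cofibrations is closed under pushouts and transfinite compositions, the same holds for the inclusion. Equivalently, one could argue that Verity's result gives an acyclic cofibration in the weak complicial model structure, and that the $(\infty,N)$-model structure is obtained by further Bousfield localization, which preserves acyclic cofibrations.

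The main obstacle is the combinatorial bookkeeping underlying Verity's decomposition: for each simplex $(x,y)$ cleaved by every partition but failing to be either a mediator or a crushed cylinder, one must identify a complicial horn or thinness extension whose pushout adjoins its marking, and then organise all such pushouts into a well-ordered sequence respecting the dependencies among marked simplices across dimensions. The cleaving condition, by forcing agreement on every partition, is tailored precisely so that this decomposition exists and uses only types (1) and (1'), without invoking saturation or triviality extensions.
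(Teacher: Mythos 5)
The paper gives no proof of its own here; it simply cites Verity's Lemma~149. Your proposal does the same thing for the substantive (acyclicity) part and supplements it with a correct elementary check that the inclusion is a monomorphism, i.e., that every mediator and crushed cylinder is cleaved by every partition. The case analysis you sketch (for mediators: $p\ge k$ gives a degenerate $x$-projection, $p<k$ gives a degenerate $y$-projection; for crushed cylinders: $p>p_0$, $p<p_0$ give degeneracies and $p=p_0$ recovers the marked factor) is right, though it would be cleaner to phrase the conclusion each time as ``degenerate, hence marked.'' Your further remark---that Verity's decomposition uses only complicial horn and thinness extensions and that these are $(\infty,N)$-acyclic for every $N$ (equivalently, that the $(\infty,N)$-structure is a left Bousfield localization of the $(\infty,\infty)$-one and acyclic cofibrations persist under localization)---makes explicit the uniformity in $N$ that the paper leaves implicit. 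In short: same approach as the paper, with a bit more detail spelled out.
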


To highlight the difference between the pretensor and the tensor, we briefly discuss an example. We refer the reader to \cite[\textsection 6.3]{VerityComplicialAMS} for a deeper treatment and for more details and examples.
\begin{ex}
We consider the case of $X=\Delta[2]_t$ and $Y=\Delta[1]$.
\begin{itemize}[leftmargin=*]
\item the simplex $\Delta[2]\to\Delta[2]_t\times\Delta[1]$ depicted as
\begin{center}
    \begin{tikzpicture}
    \def\l{1cm}
\foreach \x in {0,1,2}
\foreach \y in {0,1}
    \draw[fill] (\x*\l, \y*\l) circle (1pt) node (a\x\y){};
 \draw (a10)--(a11);
 \draw (a11)--(a21);
\end{tikzpicture}
\end{center}
is a mediator, and is therefore marked in both $\Delta[2]_t\boxtimes\Delta[1]$ and $\Delta[2]_t\otimes\Delta[1]$.
\item the simplex $\Delta[3]\to\Delta[2]_t\times\Delta[1]$ depicted as
\begin{center}
    \begin{tikzpicture}
    \def\l{1cm}
\foreach \x in {0,1,2}
\foreach \y in {0,1}
    \draw[fill] (\x*\l, \y*\l) circle (1pt) node (a\x\y){};
 \draw (a00)--(a10);
  \draw (a10)--(a20);
 \draw (a20)--(a21);
\end{tikzpicture}
\end{center}
is a crushed cylinder, and is therefore marked in both $\Delta[2]_t\boxtimes\Delta[1]$ and $\Delta[2]_t\otimes\Delta[1]$.
\item the simplex $\Delta[2]\to\Delta[2]_t\times\Delta[1]$ depicted as
\begin{center}
    \begin{tikzpicture}
    \def\l{1cm}
\foreach \x in {0,1,2}
\foreach \y in {0,1}
    \draw[fill] (\x*\l, \y*\l) circle (1pt) node (a\x\y){};
 \draw (a00)--(a11);
 \draw (a11)--(a21);
\end{tikzpicture}
\end{center}
is cleaved by every partition, and is therefore marked in  $\Delta[2]_t\otimes\Delta[1]$, but it is not marked in $\Delta[2]_t\boxtimes\Delta[1]$.
\item the simplex $\Delta[2]\to\Delta[2]_t\times\Delta[1]$ depicted as
\begin{center}
    \begin{tikzpicture}
    \def\l{1cm}
\foreach \x in {0,1,2}
\foreach \y in {0,1}
    \draw[fill] (\x*\l, \y*\l) circle (1pt) node (a\x\y){};
 \draw (a00)--(a10);
 \draw (a10)--(a21);
\end{tikzpicture}
\end{center}
is cleaved by the partitions $(2,0)$ and $(0,2)$, but not by the partition $(1,1)$, and is therefore not marked neither in $\Delta[2]_t\boxtimes\Delta[1]$ nor in $\Delta[2]_t\otimes\Delta[1]$.
\end{itemize}
\end{ex}

\section{The main theorem}

The main result is the following.

\begin{thm}
\label{pretensorquillen}
Let $N\in\mathbb N\cup\{\infty\}$. 
For any simplicial set with marking $S$ the adjunction
\[-\boxtimes S\colon\msset_{(\infty,N)}\rightleftarrows\msset_{(\infty,N)}\colon[S,-]_{oplax}\]
is a Quillen pair. In particular, the functor
\[-\boxtimes S\colon\msset_{(\infty,N)}\to\msset_{(\infty,N)}\]
is homotopical.
\end{thm}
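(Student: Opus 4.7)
The plan is to apply \cref{leftQuillen}. Preservation of cofibrations by $-\boxtimes S$ is immediate from the definition of the pretensor, since on underlying simplicial sets the functor is $-\times S$ and the marking of $X\boxtimes S$ is monotone in the marking of $X$. It therefore suffices to show that $-\boxtimes S$ sends each of the four families of elementary $(\infty,N)$-anodyne extensions of \cref{anodynemaps} to a weak equivalence in $\msset_{(\infty,N)}$.

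Three of these families---the complicial horn extensions, the thinness extensions, and the triviality extensions---are already sent to acyclic cofibrations of the non-saturated $N$-complicial model structure by \cite{VerityComplicialAMS}. Since $\msset_{(\infty,N)}$ is a left Bousfield localisation of that model structure (same cofibrations, with the saturation extensions added as acyclic cofibrations), these maps remain acyclic cofibrations, and hence weak equivalences, in our setting. The only genuinely new verification is to show that $-\boxtimes S$ carries each saturation extension $\DeltaThree{\ell}{}{eq}\to\DeltaThree{\ell}{}{\sharp}$ for $\ell\ge-1$ to an acyclic cofibration of $\msset_{(\infty,N)}$.

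Because the saturation extension is an identity on underlying simplicial sets, so is its pretensor with $S$, and the map only adds markings. From the definition of $\boxtimes$ it follows that a non-degenerate simplex is newly marked in $\DeltaThree{\ell}{}{\sharp}\boxtimes S$ precisely when it is a crushed cylinder $(x'\circ\Pi_1^{p,q},\,y'\circ\Pi_2^{p,q})$ in which $y'$ is unmarked in $S$ and $x'$ is a newly marked simplex of $\DeltaThree{\ell}{}{\sharp}$, the latter being of the form $x'=a\star b$ for some non-degenerate simplex $a$ in $\Delta[\ell]$ and some $1$-simplex $b\in\{[01],[12],[23]\}$ of $\Delta[3]_\sharp$. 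The strategy is to enumerate these newly marked simplices and to exhibit each as the top of an instance of the generalised saturation extension of \cref{generalsaturation}: the join decomposition of $x'$, combined with the partition $p+q=r$ provided by the crushed cylinder, yields a join shape $\Delta[\ell']\star\Delta[3]_\sharp\star\Delta[\ell'']$ mapping into $\DeltaThree{\ell}{}{\sharp}\boxtimes S$ whose top simplex is the given crushed cylinder. Performing the additions in a suitable order then exhibits $\DeltaThree{\ell}{}{eq}\boxtimes S\to\DeltaThree{\ell}{}{\sharp}\boxtimes S$ as a transfinite composition of pushouts of generalised saturation extensions, hence as an acyclic cofibration.

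The main obstacle I anticipate is precisely this combinatorial book-keeping: organising the enumeration so that at every step the source of the pushout already contains all markings added in previous steps. Once that is accomplished, the Quillen property holds, and the final assertion that $-\boxtimes S$ is homotopical follows from Ken Brown's lemma, since every object of $\msset_{(\infty,N)}$ is cofibrant.
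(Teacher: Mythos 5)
Your high-level framework is the same as the paper's: invoke \cref{leftQuillen}, note that cofibrations are monomorphisms and are detected on underlying simplicial sets so that $-\boxtimes S$ preserves them, and reduce to checking the four families of elementary anodyne extensions. Your treatment of the horn, thinness, and triviality families by appealing to Verity's earlier result for the non-saturated model structure and the fact that $\msset_{(\infty,N)}$ is a left Bousfield localisation of it is a legitimate shortcut; the paper instead proves a pushout-pretensor statement (\cref{pushoutpretensoranodyneboundary,pushoutpretensoranodynemarking}) against the generating cofibrations $\partial\Delta[m]\hookrightarrow\Delta[m]$ and $\Delta[m]\hookrightarrow\Delta[m]_t$ and uses compatibility of $\boxtimes$ with colimits, but the end result is the same for those three families, and both arguments are sound.

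The genuine gap is in your treatment of the saturation extensions. You propose to write $\DeltaThree{\ell}{}{eq}\boxtimes S\to\DeltaThree{\ell}{}{\sharp}\boxtimes S$ as a transfinite composition of pushouts of the generalised saturation extensions $\DeltaThree{\ell'}{\ell''}{eq}\to\DeltaThree{\ell'}{\ell''}{\sharp}$ alone, with each newly marked crushed cylinder as the ``top'' of such a pushout. This cannot work. A newly marked crushed cylinder has horizontal part $x'=a\star b$ with $b$ one of $[01],[12],[23]$, and its vertical part sits above the \emph{right endpoint} of $b$. If $b=[12]$ (resp.\ $[01]$), the vertical part is above $\ell+3$ (resp.\ $\ell+2$), so any order-preserving map out of $\Delta[\ell']\star\Delta[3]\star\Delta[\ell'']$ whose $\Delta[3]$-piece realises $b$ as an unmarked edge must either send $[3]$ past the corner of the path (so that the marked edge $[13]$ of $\eqDelta$ lands on a diagonal $1$-simplex, which is neither a mediator nor a crushed cylinder, hence unmarked --- killing the required markedness of $\varphi$) or fail to contain the given crushed cylinder as a face because the trailing $\Delta[\ell'']$-piece is forced onto the column $\ell+4$. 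So the L-paths with $b\in\{[01],[12]\}$ are \emph{not} reachable by pushouts of saturation extensions, and this is precisely why the paper's proof of \cref{pushouttensorsaturation} only uses saturation extensions in step~(2) (for $\sigma''\in\{[03],[23]\}$) and then spends steps~(3)--(6) marking the remaining simplices via thinness anodyne extensions $\Delta^{z+1}[r+1]'\hookrightarrow\Delta^{z+1}[r+1]''$, after first reducing the general $S$ to the boundary inclusion $\partial\Delta[m]\hookrightarrow\Delta[m]$ and passing to the tensor via \cref{EquivalentTensors}. The ``combinatorial book-keeping'' you flag as the anticipated difficulty is not merely a matter of ordering: the ambient class of acyclic cofibrations you need to allow yourself is strictly larger than pushouts of saturation extensions, and identifying which thinness cells to attach, and in what order, so that each attaching map is already well-defined on the current stage, is the actual content of \cref{pushouttensorsaturation}.
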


The theorem admits many essentially equivalent reformulations or direct consequences, which we collect as corollaries.

Using \cref{EquivalentTensors} we obtain the following corollary.

\begin{cor}
\label{tensorhomotopical}
Let $N\in\mathbb N\cup\{\infty\}$. For any simplicial set with marking $S$ the functor
\[-\otimes S\colon\msset_{(\infty,N)}\to\msset_{(\infty,N)}\]
is homotopical.
\end{cor}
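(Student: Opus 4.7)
The plan is to derive this corollary directly from \cref{pretensorquillen} and \cref{EquivalentTensors}. The first step is to observe that in the model structure $\msset_{(\infty,N)}$ every object is cofibrant, since the cofibrations are precisely the monomorphisms and the empty simplicial set with marking is the initial object. As an immediate consequence of Ken Brown's lemma, the left Quillen functor $-\boxtimes S$ provided by \cref{pretensorquillen} must therefore preserve all $(\infty,N)$-weak equivalences, not only those between cofibrant objects.

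The second step exploits the naturality of the canonical inclusion $X\boxtimes Y\hookrightarrow X\otimes Y$ from \cref{EquivalentTensors}. Given an $(\infty,N)$-weak equivalence $f\colon X\to X'$, the plan is to compare $f\otimes S$ with $f\boxtimes S$ through the commutative square
\[
\begin{tikzcd}
X\boxtimes S \arrow[r, hook] \arrow[d, "f\boxtimes S"'] & X\otimes S \arrow[d, "f\otimes S"] \\
X'\boxtimes S \arrow[r, hook] & X'\otimes S,
\end{tikzcd}
\]
in which the horizontal arrows are $(\infty,N)$-acyclic cofibrations by \cref{EquivalentTensors} and the left vertical arrow is a weak equivalence by the previous step. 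Applying the two-out-of-three property then forces $f\otimes S$ to be a weak equivalence, which is exactly the content of the claim that $-\otimes S$ is homotopical.

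Since the result follows almost formally from the two cited statements, there is no real obstacle to overcome; the only ingredient worth flagging explicitly is the observation that every object of $\msset_{(\infty,N)}$ is cofibrant, which lets us dispense with cofibrant replacement when invoking Ken Brown's lemma. Note that this strategy is asymmetric in $S$ only because \cref{pretensorquillen} is phrased for the first variable, which matches the corollary as stated.
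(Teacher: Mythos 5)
Your proof is correct and fills in, in the natural way, exactly the argument the paper leaves implicit: the paper merely says the corollary follows ``Using \cref{EquivalentTensors}'', and your comparison square plus two-out-of-three is precisely that deduction. The Ken Brown step is a slight elaboration --- \cref{pretensorquillen} already asserts in its ``In particular'' clause that $-\boxtimes S$ is homotopical --- but it is a correct justification of that clause, so the approaches coincide.
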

The statement can then be strengthened as follows.

\begin{cor}
\label{tensorhomotopical}
Let $N\in\mathbb N\cup\{\infty\}$. The functor
\[-\otimes-\colon\msset_{(\infty,N)}\times\msset_{(\infty,N)}\to\msset_{(\infty,N)}\]
is homotopical.
\end{cor}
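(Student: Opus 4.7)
The approach is the standard factorization argument that deduces homotopicality of a bifunctor from homotopicality in each variable separately. Given a morphism $(f,g)\colon (X,Y)\to(X',Y')$ in the product model structure that is a weak equivalence, meaning both $f\colon X\to X'$ and $g\colon Y\to Y'$ are $(\infty,N)$-weak equivalences, I would write
\[f\otimes g \;=\; (\id_{X'}\otimes g)\circ(f\otimes\id_Y)\colon X\otimes Y\longrightarrow X'\otimes Y\longrightarrow X'\otimes Y'\]
and verify that each factor is a weak equivalence; the two-out-of-three property for weak equivalences in $\msset_{(\infty,N)}$ then yields the claim.

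The first factor $f\otimes\id_Y$ is a weak equivalence by direct application of the preceding single-variable corollary with $S=Y$ to the weak equivalence $f$. For the second factor $\id_{X'}\otimes g$, I need the symmetric statement that $X'\otimes-$ is homotopical. This is obtained by repeating the proof of \cref{pretensorquillen} in the other slot: the earlier proposition recording adjunctions already supplies the adjunction $S\boxtimes-\dashv[S,-]_{lax}$, and the criterion of \cref{leftQuillen} is indifferent to which slot of the bifunctor one is working in, so the same verification used to prove \cref{pretensorquillen} shows that $S\boxtimes-$ is left Quillen for every $S$. Passing back from the pretensor to the tensor via \cref{EquivalentTensors} then gives that $X'\otimes-$ is homotopical, and in particular that $\id_{X'}\otimes g$ is an $(\infty,N)$-weak equivalence.

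The entire argument is therefore formal once \cref{pretensorquillen} and its slot-swapped analogue are in hand. The only point requiring care is confirming that the proof of \cref{pretensorquillen} does not exploit any asymmetry between the two variables; given the symmetric roles played by the two factors in the definitions of mediators and crushed cylinders, and the symmetric form of the adjunctions, this verification is expected to be routine rather than a genuine obstacle.
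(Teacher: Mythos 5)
Your overall factorization $f\otimes g = (\id_{X'}\otimes g)\circ(f\otimes\id_Y)$ followed by two-out-of-three is the same skeleton the paper uses, and the first factor is handled correctly by \cref{pretensorquillen}. The gap is in how you dispose of the second factor. You assert that because of ``the symmetric roles played by the two factors in the definitions of mediators and crushed cylinders'' the proof of \cref{pretensorquillen} can be ``repeated in the other slot'' with only routine changes. This is not true: both $\boxtimes$ and $\otimes$ are manifestly \emph{asymmetric} in their two arguments. A mediator requires $x = s_{k-1}x'$ and $y = s_{k}y'$, so the degeneracy indices in the two coordinates differ by one; a crushed cylinder uses $\Pi_1^{p,q}$ on the first coordinate and $\Pi_2^{p,q}$ on the second, which are not interchangeable; and in the cleaving condition for $\otimes$ the operator $\amalg_1^{p,q}$ selects the initial segment while $\amalg_2^{p,q}$ selects the terminal segment. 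Consequently $X\boxtimes Y$ and $Y\boxtimes X$ (resp. $X\otimes Y$ and $Y\otimes X$) are in general non-isomorphic, and Propositions \ref{pushoutpretensoranodynemarking}, \ref{pushoutpretensoranodyneboundary}, \ref{pushoutpretensortriviality}, \ref{pushouttensorsaturation}, which are stated and proved with the anodyne extension in the first slot and the generating cofibration in the second, do not automatically transfer to the transposed configuration. So ``the same verification'' does not in fact apply, and the criterion of \cref{leftQuillen} cannot be invoked for $S\boxtimes -$ without new input.

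The paper closes this gap with a genuinely different device: it first proves that $(-)^{\op}$ is left Quillen and reflects weak equivalences (\cref{opequivalence}), then exploits the \emph{anti}-symmetry $(X\otimes Y)^{\op}\cong Y^{\op}\otimes X^{\op}$ (a cited lemma of Verity). Given a weak equivalence $g\colon X\to Y$, one passes to $g^{\op}$, tensors on the right with $S^{\op}$ using the already-established one-variable result, and then undoes the $\op$. This is the essential idea you are missing: the tensor products are not symmetric, but they are compatible with $\op$ in a way that converts a right-slot statement into a left-slot one. If you want to complete your argument as written, you would have to either reprove the pushout-product propositions with the slots swapped (non-routine) or discover the $\op$-transposition trick, at which point you would have reconstructed the paper's proof.
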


\begin{lem}
\label{opequivalence}
Let $f\colon X\to Y$ be a map of simplicial sets with marking. Then $f$ is a weak equivalence in the model structure for saturated $N$-complicial sets if and only if $f^{\op}$ is one.
\end{lem}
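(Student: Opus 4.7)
The plan is to exploit the fact that the op-involution on $\msset$ is its own inverse, so it suffices to show it preserves weak equivalences in $\msset_{(\infty,N)}$; the reverse implication then follows by applying the same argument to $f^{\op}$. To establish preservation of weak equivalences, I would use \cref{leftQuillen} to verify that $(-)^{\op}\colon\msset_{(\infty,N)}\to\msset_{(\infty,N)}$ is a left Quillen functor. This functor is clearly a left adjoint (with itself as right adjoint, since it is an involution), and it preserves cofibrations because it preserves monomorphisms on the underlying simplicial sets and it preserves the marking condition by construction.

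The bulk of the work is then to check that $(-)^{\op}$ sends each elementary $(\infty,N)$-anodyne extension from \cref{anodynemaps} to a weak equivalence. I would proceed case by case: for a complicial horn extension $\Lambda^k[m]\to\Delta^k[m]$, applying $(-)^{\op}$ yields the complicial horn extension $\Lambda^{m-k}[m]\to\Delta^{m-k}[m]$ (after identifying $[m]^{\op}\cong[m]$ via $i\mapsto m-i$), which is again elementary anodyne. The thinness extensions $\Delta^k[m]'\to\Delta^k[m]''$ behave analogously under the op-duality, and the triviality extensions $\Delta[p]\to\Delta[p]_t$ are manifestly self-dual since both the marking of the top simplex and the empty marking are preserved.

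The one case that is not immediately self-dual is the left saturation extension $\DeltaThree{\ell}{}{eq}\to\DeltaThree{\ell}{}{\sharp}$: under $(-)^{\op}$ this becomes the \emph{right} saturation extension $\DeltaThree{}{\ell}{eq}\to\DeltaThree{}{\ell}{\sharp}$ (using that the join satisfies $(A\star B)^{\op}\cong B^{\op}\star A^{\op}$ and that both $\eqDelta$ and $\Delta[3]_\sharp$ are self-dual as simplicial sets with marking). This is the place where I expect the main obstacle, but it is already handled by \cref{generalsaturation}, which guarantees that the general saturation extension $\DeltaThree{\ell'}{\ell}{eq}\to\DeltaThree{\ell'}{\ell}{\sharp}$ is an acyclic cofibration for all $\ell,\ell'\geq-1$, in particular for $\ell'=-1$.

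Having verified all four cases, \cref{leftQuillen} gives that $(-)^{\op}$ is left Quillen, so it preserves weak equivalences between cofibrant objects; but every object of $\msset_{(\infty,N)}$ is cofibrant, so $(-)^{\op}$ preserves all weak equivalences. Applying the same conclusion to $f^{\op}$ and using $(f^{\op})^{\op}=f$ yields the converse direction, completing the proof.
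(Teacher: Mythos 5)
Your proposal is correct and follows essentially the same strategy as the paper: use \cref{leftQuillen} to show $(-)^{\op}$ is left Quillen by checking it preserves cofibrations and sends each class of elementary anodyne extension to weak equivalences, with the left saturation extensions being the nontrivial case handled by \cref{generalsaturation}, and then derive the biconditional from the involutive property of $(-)^{\op}$. The only difference is cosmetic — the paper cites specific numbered observations from \cite{VerityComplicialI,VerityComplicialAMS} for the identifications $\Lambda^k[m]^{\op}\cong\Lambda^{m-k}[m]$, etc., whereas you assert them directly — but the content and order of the argument are the same.
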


\begin{proof}[Proof of \cref{opequivalence}]
We argue that $(-)^{\op}$ is left Quillen, so in particular homotopical, and hence respects weak equivalences. Given the canonical isomorphism $(X^{\op})^{\op}\cong X$ from \cite[Observation 38]{VerityComplicialI}, we also obtain that $(-)^{\op}$ reflects weak equivalences, concluding the proof.

To see that $(-)^{\op}\colon\msset_{(\infty,N)}\to\msset_{(\infty,N)}$ is a left Quillen functor, we observe the following.
\begin{enumerate}[leftmargin=*]
    \item[(0)] Since $(-)^{\op}$ is an isomorphism, if $X\to Y$ is a monomorphism, then $X^{\op}\to Y^{\op}$ is a monomorphism, so $(-)^{\op}$ preserves cofibrations.
    \item By \cite[Observation 157]{VerityComplicialAMS}, for for $m\geq 0$ and $0\leq k\leq m$ the map $\Lambda^k[m]^{\op}\to\Delta^k[m]^{\op}$ is the map $\Lambda^{m-k}[m]\to \Delta^{m-k}[m]$, which is a weak equivalence in the model structure for saturated $N$-complicial sets. In particular, $(-)^{\op}$ sends complicial horn extensions to weak equivalences.
    \item By \cite[Observation 125]{VerityComplicialAMS}, for $m\geq 0$ and $0\leq k\leq m$ the map $\Delta^k[m]'^{\op}\to\Delta^k[m]''^{\op}$ is the map $\Delta^{m-k}[m]'\to\Delta^{m-k}[m]''$, which is a weak equivalence in the model structure for saturated $N$-complicial sets. In particular, $(-)^{\op}$ sends thinness extensions to weak equivalences.
    \item By \cite[Observation 107]{VerityComplicialAMS}, for $p>N$ the map $\Delta[p]^{\op}\to\Delta[p]_t^{\op}$ is $\Delta[p]\to \Delta[p]_t$, which is a weak equivalence in the model structure for saturated $N$-complicial sets. In particular $(-)^{\op}$ sends triviality extensions for $p>N$ to weak equivalences.
    \item For $\ell\ge-1$, one can use \cite[Observation 36]{VerityComplicialI} to show that the map $\DeltaThree{\ell}{}{eq}^{\op}\to\DeltaThree{\ell}{}{\sharp}^{\op}$ is the map $\DeltaThree{}{\ell}{eq}\to\DeltaThree{}{\ell}{\sharp}$, which was shown in \cref{generalsaturation} to be a weak equivalence in the model structure for saturated $N$-complicial sets. In particular, $(-)^{\op}$ sends left saturation extensions to weak equivalences.
\end{enumerate}
By \cref{leftQuillen}, we then conclude that $(-)^{\op}$ is a left Quillen functor, as desired.
\end{proof}

\begin{proof}[Proof of \cref{tensorhomotopical}]
We already know from \cref{pretensorquillen} that the functor $\otimes$ respects weak equivalences in the first variable, and we now check that it respects weak equivalences in the second variable, too.
If $X\to Y$ is a weak equivalence, by \cref{opequivalence} the map $X^{\op}\to Y^{\op}$ is a weak equivalence.
By \cref{pretensorquillen} the map $X^{\op}\boxtimes S^{\op}\to Y^{\op}\boxtimes S^{\op}$ is a weak equivalence. By \cref{EquivalentTensors} the map $X^{\op}\otimes S^{\op}\to Y^{\op}\otimes S^{\op}$, which is by \cite[Lemma 131]{VerityComplicialAMS} the map $(S\otimes X)^{\op}\to (S\otimes Y)^{\op}$, is a weak equivalence. Using \cref{opequivalence}, the map $S\otimes X\to S\otimes Y$ is then a weak equivalence, as desired.
\end{proof}

Using again \cref{EquivalentTensors} we obtain the following corollary.

\begin{cor}
Let $N\in\mathbb N\cup\{\infty\}$. The functor
\[-\boxtimes-\colon\msset_{(\infty,N)}\times\msset_{(\infty,N)}\to\msset_{(\infty,N)}\]
is homotopical.
\end{cor}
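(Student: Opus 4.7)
The plan is to deduce this result from the bifunctorial homotopicality of the tensor product $\otimes$ established in the preceding \cref{tensorhomotopical}, combined with the objectwise comparison between $\boxtimes$ and $\otimes$ provided by \cref{EquivalentTensors}. Since $-\boxtimes-$ is a bifunctor, it suffices to show that for any pair of weak equivalences $f\colon X\to Y$ and $g\colon X'\to Y'$ in $\msset_{(\infty,N)}$, the induced map $f\boxtimes g\colon X\boxtimes X'\to Y\boxtimes Y'$ is again a weak equivalence.

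The key step is to fit $f\boxtimes g$ into the commutative square
\[
\begin{tikzcd}
X\boxtimes X' \arrow[r, "f\boxtimes g"] \arrow[d, hook] & Y\boxtimes Y' \arrow[d, hook] \\
X\otimes X' \arrow[r, "f\otimes g"] & Y\otimes Y'
\end{tikzcd}
\]
whose vertical arrows are the canonical inclusions supplied by \cref{EquivalentTensors}. Commutativity is immediate by the naturality of $X\boxtimes Y\hookrightarrow X\otimes Y$ in both variables, which in turn follows from the fact that this inclusion is the identity on underlying simplicial sets. By \cref{EquivalentTensors} the vertical maps are $(\infty,N)$-acyclic cofibrations and in particular weak equivalences, while by \cref{tensorhomotopical} the bottom horizontal map $f\otimes g$ is a weak equivalence. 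The two-out-of-three property of weak equivalences in $\msset_{(\infty,N)}$ then forces $f\boxtimes g$ to be a weak equivalence, completing the argument.

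There is essentially no genuine obstacle here, as all the necessary inputs have already been assembled in the preceding statements. The argument is fundamentally a transport of a bifunctorial homotopicality along a natural objectwise acyclic cofibration; the same strategy would transfer any such homotopicality property between two objectwise weakly equivalent bifunctors related by a natural transformation of acyclic cofibrations. Notably, this proof sidesteps the fact that the pretensor $\boxtimes$ fails to be associative and the tensor $\otimes$ fails to preserve colimits in each variable, by letting each of the two bifunctors play to its own strength.
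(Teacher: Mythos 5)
Your argument is correct and follows essentially the same path the paper takes: it deduces homotopicality of $\boxtimes$ from that of $\otimes$ via the natural objectwise $(\infty,N)$-acyclic cofibration $X\boxtimes Y\hookrightarrow X\otimes Y$ of \cref{EquivalentTensors} and the two-out-of-three property. The paper compresses this to ``Using again \cref{EquivalentTensors}''; you have simply supplied the commutative square making that reduction explicit.
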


Since cofibrations in the model category $\msset_{(\infty,N)}$ are checked on the underlying simplicial set, we obtain the following corollary.

\begin{cor}
\label{pretensorbiquillen}
Let $N\in\mathbb N\cup\{\infty\}$. 
The functor
\[-\boxtimes-\colon\msset_{(\infty,N)}\times\msset_{(\infty,N)}\to\msset_{(\infty,N)}\]
is a left Quillen bifunctor.
In particular, for any simplicial set with marking $S$ the adjunction
\[S\boxtimes -\colon\msset_{(\infty,N)}\rightleftarrows\msset_{(\infty,N)}\colon[S,-]_{lax}\]
is a Quillen pair.
\end{cor}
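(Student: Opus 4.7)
The plan is to verify the pushout-product axiom for $\boxtimes$ directly; the stated Quillen adjunction then follows formally, because restricting the pushout-product axiom to the cofibration $\emptyset\to S$ shows that $S\boxtimes -$ sends (acyclic) cofibrations in the second variable to (acyclic) cofibrations, and $S\boxtimes -$ preserves colimits and so admits the right adjoint $[S,-]_{lax}$.

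For the cofibration half of the axiom, I would take cofibrations $f\colon A\to B$ and $g\colon C\to D$ in $\msset_{(\infty,N)}$. Since the underlying simplicial set of $X\boxtimes Y$ is $X\times Y$ and $\boxtimes$ preserves colimits in each variable, the pushout-product $f\square g$ computed in $\msset$ has underlying simplicial map equal to the cartesian pushout-product of the underlying monomorphisms $f$ and $g$. The cartesian pushout-product of monomorphisms in $\sset$ is itself a monomorphism, and cofibrations in $\msset_{(\infty,N)}$ are detected on the underlying simplicial set, so $f\square g$ is a cofibration.

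For the acyclic half, I would first consider the case in which $f$ is an acyclic cofibration, and form the pushout $P = B\boxtimes C \cup_{A\boxtimes C} A\boxtimes D$, so that $f\square g$ is the canonical comparison map $P\to B\boxtimes D$. By \cref{pretensorquillen} the functors $-\boxtimes C$ and $-\boxtimes D$ are left Quillen, so $A\boxtimes C\to B\boxtimes C$ is an acyclic cofibration and $A\boxtimes D\to B\boxtimes D$ is a weak equivalence. Every object of $\msset_{(\infty,N)}$ is cofibrant (cofibrations are monomorphisms), so the model structure is left proper, and hence the pushout map $A\boxtimes D\to P$ of the acyclic cofibration $A\boxtimes C\to B\boxtimes C$ is itself a weak equivalence; two-out-of-three applied to $A\boxtimes D\to P\to B\boxtimes D$ then yields that $f\square g$ is a weak equivalence. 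The symmetric case, in which $g$ rather than $f$ is the acyclic cofibration, is handled by an entirely analogous pushout-and-two-out-of-three argument, now using the preceding corollary (that $\boxtimes$ is homotopical) combined with the cofibration half above to see that $A\boxtimes g$ is an acyclic cofibration and $B\boxtimes g$ is a weak equivalence. The substantive homotopy-theoretic input has already been supplied by \cref{pretensorquillen} and the preceding homotopical corollary; the remainder is formal bookkeeping with monomorphisms, universal cofibrancy, and two-out-of-three, so I do not anticipate a genuine further obstacle.
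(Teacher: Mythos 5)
Your proposal is correct and fills in, in standard fashion, exactly the argument the paper sketches with the single remark that cofibrations are detected on underlying simplicial sets: the cofibration half is the cartesian pushout-product of monomorphisms, and the acyclic half is the formal two-out-of-three bookkeeping using \cref{pretensorquillen} for the first variable and the homotopical property of $\boxtimes$ (the preceding corollary) for the second. One small point: you do not need left properness to see that $A\boxtimes D\to P$ is a weak equivalence --- it is a pushout of the acyclic cofibration $A\boxtimes C\to B\boxtimes C$, and acyclic cofibrations are stable under pushout in any model category, so universal cofibrancy plays no role here.
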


\subsection{The formal part of the proof}

In this subsection we prove \cref{pretensorquillen} building on existing work of the third-named author and on a technical fact (\cref{pushouttensorsaturation}) whose proof will be postponed until the last subsection.
\begin{prop}
\label{pushoutpretensoranodynemarking}
Let $N\in\mathbb N\cup\{\infty\}$. For any $m\ge0$ the pushout-pretensor
\[(J\boxtimes \Delta[m])\aamalg{I\boxtimes \Delta[m]}(I\boxtimes \Delta[m]_t)\to J\boxtimes \Delta[m]_t\]
of an $(\infty,N)$-elementary anodyne extension $I\to J$ with the canonical map $\Delta[m]\hookrightarrow\Delta[m]_t$
is an $(\infty,\infty)$-acyclic cofibration.
\end{prop}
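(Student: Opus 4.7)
The plan is to argue by case analysis on the type of elementary $(\infty,N)$-anodyne extension $I\to J$, following the enumeration in \cref{anodynemaps}. In each case, the aim is to exhibit the pushout-pretensor $(J\boxtimes\Delta[m])\cup_{I\boxtimes\Delta[m]}(I\boxtimes\Delta[m]_t)\to J\boxtimes\Delta[m]_t$ as lying in the weakly saturated class generated by the elementary $(\infty,\infty)$-anodyne extensions; since this class is closed under pushouts, transfinite compositions, and retracts, it will then follow that the map is an $(\infty,\infty)$-acyclic cofibration.

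For the complicial horn and thinness cases $\Lambda^k[m]\to\Delta^k[m]$ and $\Delta^k[m]'\to\Delta^k[m]''$, I would invoke the third-named author's earlier compatibility results of the pretensor with the non-saturated $N$-complicial model structure \cite{VerityComplicialAMS}, which supply explicit filtrations of the pushout-pretensor built entirely out of horn and thinness extensions; these are themselves elementary $(\infty,\infty)$-anodynes, so no further work is needed. For the triviality case $\Delta[p]\to\Delta[p]_t$ with $p>N$, both maps in the pushout-pretensor diagram are identities on the underlying simplicial sets, so the pushout-pretensor is as well; a direct combinatorial analysis of which mediator and crushed cylinder simplices acquire new markings allows us to produce them via thinness (and, where necessary, saturation) anodyne extensions applied to appropriate sub-complexes, again drawing on the techniques of \cite{VerityComplicialAMS}.

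The main obstacle, and the principal new content of the proposition, is the saturation case where $I\to J$ is the left saturation extension $\DeltaThree{\ell}{}{eq}\to\DeltaThree{\ell}{}{\sharp}$. Here one must show that the extra simplices becoming marked in $\DeltaThree{\ell}{}{\sharp}\boxtimes\Delta[m]_t$, relative to the pushout, can be produced by a sequence of saturation (together with horn and thinness) anodyne extensions. This is precisely the content of the technical \cref{pushouttensorsaturation}, whose proof the authors postpone to the final subsection. The difficulty lies in locating, among the many mediator and crushed cylinder simplices of the pretensor of a three-simplex-equivalence with a marked interval, the correct equivalence-type 3-simplices whose saturation extensions witness the desired new markings, and in assembling these extensions into a coherent filtration.
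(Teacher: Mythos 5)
You have conflated this proposition with \cref{pushoutpretensoranodyneboundary}. The map here is the pushout-pretensor with the \emph{marking} inclusion $\Delta[m]\hookrightarrow\Delta[m]_t$, whereas \cref{pushouttensorsaturation}, which you invoke as handling the saturation case, concerns the pushout-tensor with the \emph{boundary} inclusion $\partial\Delta[m]\hookrightarrow\Delta[m]$; it plays no role in the present statement.

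The structural observation you are missing is \cite[Lemma~140]{VerityComplicialAMS}: the pushout-pretensor of two \emph{entire} maps (maps that are isomorphisms on underlying simplicial sets) is itself an isomorphism. The thinness extensions, the saturation extensions, and the triviality extensions are all entire, as is the marking inclusion $\Delta[m]\hookrightarrow\Delta[m]_t$. Consequently in all three of these cases the pushout-pretensor in question is an isomorphism, hence trivially an acyclic cofibration, and no filtration, no combinatorial bookkeeping, and in particular no appeal to \cref{pushouttensorsaturation} is needed. The only elementary anodyne extension that is not entire is the complicial horn extension, and for that single case one cites the argument in the proof of \cite[Lemma~169]{VerityComplicialAMS}, as you do. Your proposed ``direct combinatorial analysis'' for the triviality case is therefore vacuous (the map is already an isomorphism, so there are no new markings to produce), and your assertion that the saturation case is ``the principal new content of the proposition'' is mistaken --- the genuine new work on saturation occurs in \cref{pushouttensorsaturation} and feeds into \cref{pushoutpretensoranodyneboundary}, not here.
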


\begin{proof}
By \cite[Lemma 140]{VerityComplicialAMS} the pushout-pretensor of two \emph{entire} maps in the sense of \cite[Notation 100]{VerityComplicialAMS}, namely maps that are an isomorphism on the underying simplicial sets,
is an isomorphism. Hence, in particular the pushout-pretensor of a complicial thinness extension $\Delta^k[m]'\hookrightarrow\Delta^k[m]''$ with the canonical map $\Delta[m]\hookrightarrow\Delta[m]_t$ is an isomorphism. Moreover, it is explained in the proof of \cite[Lemma 169]{VerityComplicialAMS} that the pushout-pretensor of a complicial horn extension $\Lambda^k[m]\hookrightarrow\Delta^k[m]$ with the canonical map $\Delta[m]\hookrightarrow\Delta[m]_t$ is an $(\infty,\infty)$-acyclic cofibration.
\end{proof}

\begin{prop}
\label{pushoutpretensoranodyneboundary}
Let $N\in\mathbb N\cup\{\infty\}$. For any $m\ge0$ the pushout-pretensor
\[(J\boxtimes \partial\Delta[m])\aamalg{I\boxtimes \partial\Delta[m]}(I\boxtimes \Delta[m])\to J\boxtimes \Delta[m]\]
of an elementary $(\infty,N)$-anodyne extension $I\to J$ with a boundary inclusion $\partial\Delta[m]\hookrightarrow\Delta[m]$
is an $(\infty,N)$-acyclic cofibration.
\end{prop}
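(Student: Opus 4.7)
The plan is to prove \cref{pushoutpretensoranodyneboundary} by a case analysis on the four types of elementary $(\infty,N)$-anodyne extensions $I\to J$ listed in \cref{anodynemaps}. The underlying simplicial-set map of the pushout-pretensor is always a monomorphism, so it is automatically a cofibration in $\msset_{(\infty,N)}$, and the task in each case reduces to verifying the weak equivalence property.

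For the complicial horn and thinness extensions (types (1) and (1$'$)), the conclusion follows from the left Quillen bifunctoriality of $\boxtimes$ with respect to the (non-saturated) complicial model structure, a result due to the third-named author in \cite{VerityComplicialAMS}. Since acyclic cofibrations in the complicial structure remain acyclic cofibrations after further localizing to saturated $N$-complicial sets, these two cases transfer without additional work.

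For the triviality extensions $\Delta[p]\to\Delta[p]_t$ with $p>N$ (type (3)), one argues directly. The map is entire, so the pushout-pretensor is the identity on underlying simplicial sets, and the question reduces to identifying the markings that are present in $\Delta[p]_t\boxtimes\Delta[m]$ but absent from the pushout. Unpacking the mediator and crushed-cylinder conditions, and using the fact that the only non-degenerate simplex of $\Delta[m]$ hitting every vertex is $\id_{\Delta[m]}$, one finds that the sole such new marking is on the top crushed cylinder $(\Pi_1^{p,m},\Pi_2^{p,m})\colon\Delta[p+m]\to\Delta[p]\times\Delta[m]$, which has dimension $p+m>N$. The pushout-pretensor is thus obtained by pushing out along the triviality anodyne extension $\Delta[p+m]\to\Delta[p+m]_t$, and hence is itself an $(\infty,N)$-anodyne extension.

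The left saturation extensions $\DeltaThree{\ell}{}{eq}\to\DeltaThree{\ell}{}{\sharp}$ (type (2)) constitute the main obstacle and the genuinely new content of the proof. Neither existing bifunctoriality nor a dimension shortcut applies here: one must analyze in detail how the three-dimensional equator markings interact with crushed cylinders and mediators in the pretensor, and moreover how the resulting markings can be propagated back through a composite of saturation-style anodyne extensions. We therefore defer this case to the technical \cref{pushouttensorsaturation}, whose proof is carried out separately in the final subsection.
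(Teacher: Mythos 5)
Your four-case decomposition matches the paper's, and the overall strategy is the same; I highlight two points of difference, one improvement and one small gap. For the triviality case your direct argument is sharper than the paper's: you pin down that $\Delta[p]_t\boxtimes\Delta[m]$ differs from the pushout by precisely the single marking on $(\Pi_1^{p,m},\Pi_2^{p,m})\colon\Delta[p+m]\to\Delta[p]\times\Delta[m]$, of dimension $p+m>N$, so a single triviality anodyne extension $\Delta[p+m]\to\Delta[p+m]_t$ does the job. Your calculation is correct: mediator markings are independent of the ambient marking and so carried by the pushout; a new crushed cylinder must have first factor the top simplex $\id_{[p]}$ and second factor non-degenerate and, for the simplex to be non-degenerate and to miss $\Delta[p]\times\partial\Delta[m]$, also injective and surjective, forcing $y'=\id_{[m]}$ and $r=p+m$. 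The paper's \cref{pushoutpretensortriviality} instead runs a cruder filtration, marking \emph{all} $r$-simplices with $r\ge p$ by triviality anodyne extensions, and (despite the name) is actually stated and proved for the tensor $\otimes$ rather than the pretensor; your sharper computation of the $\boxtimes$-difference sidesteps that inelegance. The gap is in the saturation case: \cref{pushouttensorsaturation} is a statement about $\otimes$, not $\boxtimes$, so it does not directly give the pushout-pretensor claim you need. The paper bridges this explicitly by invoking \cref{EquivalentTensors} together with the observation that the relevant pushout square is a homotopy pushout (the maps $X\boxtimes Y\hookrightarrow X\otimes Y$ are entire acyclic cofibrations, and the pushout is along a cofibration), whence the $\boxtimes$-pushout-corner map and the $\otimes$-pushout-corner map are weakly equivalent. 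You should add this reduction; without it, the deferral to \cref{pushouttensorsaturation} does not close the saturation case.
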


\begin{proof}
We treat each type of elementary anodyne extension.
\begin{enumerate}[leftmargin=*]
\item It is explained in the proof of \cite[Lemma 143]{VerityComplicialAMS} that the
pushout-pretensor of an thinness elementary anodyne extension $\Delta^k[m]'\hookrightarrow\Delta^k[m]''$ with a boundary inclusion is an $(\infty,\infty)$-acyclic cofibration.
\item It is explained in the proof of \cite[Lemma 169]{VerityComplicialAMS} that the
pushout-pretensor of a complicial horn $\Lambda^k[m]\hookrightarrow\Delta^k[m]$ extension with a boundary inclusion is an $(\infty,\infty)$-acyclic cofibration.
\item We will show in \cref{pushouttensorsaturation} that the pushout-tensor of a left saturation extension $\DeltaThree{\ell}{}{eq}\to\DeltaThree{\ell}{}{\sharp}$ with a boundary inclusion is an $(\infty,\infty)$-acyclic cofibration. By \cref{EquivalentTensors} (together with the fact that the pushout that needs to be analyzed is in fact a homotopy pushout), this implies that also that the pushout-pretensor of a left saturation extension $\DeltaThree{\ell}{}{eq}\hookrightarrow\DeltaThree{\ell}{}{\sharp}$ with a boundary inclusion is an $(\infty,\infty)$-acyclic cofibration.
\item We will show in \cref{pushoutpretensortriviality} that the pushout-pretensor of a triviality extension $\Delta[p]\to\Delta[p]_t$ for $p>N$ with a boundary inclusion is an $(\infty,N)$-acyclic cofibration.\qedhere
\end{enumerate}
\end{proof}

The proof above made use of the following two propositions.

\begin{prop}
\label{pushoutpretensortriviality}
Let $N\in\mathbb N\cup\{\infty\}$. For any $m\ge0$ and $p>N$ the pushout-pretensor
\[( \Delta[p]_t\otimes \partial \Delta[m]) \aamalg{\Delta[p] \otimes\partial \Delta[m] } (\Delta[p]\otimes \Delta[m]) \to \Delta[p]_t \otimes \Delta[m]\]
of an $(\infty,N)$-triviality anodyne extension
$\Delta[p]\to\Delta[p]_t$
with a boundary inclusion $\partial\Delta[m]\hookrightarrow\Delta[m]$ is an $(\infty,N)$-acyclic cofibration.
\end{prop}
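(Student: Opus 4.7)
My plan is to exploit the fact that the map in question is the identity on underlying simplicial sets, so the entire statement reduces to showing that the extra marked simplices of the target $\Delta[p]_t\otimes\Delta[m]$ can be added to the source by attaching triviality anodyne extensions. Since $p>N$, these extensions will automatically be $(\infty,N)$-elementary anodyne, and the argument will conclude via closure of anodyne extensions under coproducts and pushouts.

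First I would unpack the markings on both sides explicitly. A non-degenerate $r$-simplex $(x,y)\colon\Delta[r]\to\Delta[p]\times\Delta[m]$ is marked in the target $\Delta[p]_t\otimes\Delta[m]$ precisely when for every partition $r=a+b$ either the $a$-simplex $x\circ\amalg_1^{a,b}$ is degenerate or equals the top non-degenerate $p$-simplex of $\Delta[p]_t$, or the $b$-simplex $y\circ\amalg_2^{a,b}$ is degenerate. In the pushout source, the same simplex is marked iff either its second coordinate $y$ factors through $\partial\Delta[m]$ (and the cleavage condition in $\Delta[p]_t\otimes\partial\Delta[m]$ holds), or it is already cleaved in $\Delta[p]\otimes\Delta[m]$ using only degenerate markings of $\Delta[p]$. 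The extra markings are therefore exactly the non-degenerate simplices whose $y$ is surjective onto $[m]$ and whose cleavage in the target genuinely invokes the top $p$-simplex of $\Delta[p]_t$ at some partition.

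The crucial observation is then that every such extra-marked simplex has dimension $r\ge p>N$. Indeed, the use of the top $p$-simplex at a partition $r=a+b$ forces $a=p$ and pins down $x\circ\amalg_1^{p,b}$ as the identity inclusion $[p]\hookrightarrow[p]$, which requires $r\ge p$ and determines $x$ completely as $x(i)=\min(i,p)$. With this dimension bound in hand, the inclusion of markings can be realized as a single pushout of
\[\coprod_\sigma\Delta[r_\sigma]\hookrightarrow\coprod_\sigma\Delta[r_\sigma]_t\]
attached along the family of all extra-marked non-degenerate simplices $\sigma\colon\Delta[r_\sigma]\to\Delta[p]_t\otimes\Delta[m]$, each of dimension $r_\sigma\ge p>N$. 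Since every map $\Delta[r]\to\Delta[r]_t$ with $r>N$ is an $(\infty,N)$-elementary anodyne extension, and anodyne extensions are closed under coproducts and pushouts, the whole map is an $(\infty,N)$-acyclic cofibration.

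The step requiring the most care is the combinatorial bookkeeping involved in identifying the extra markings, where one must track simultaneously the cleavage condition in $\Delta[p]_t\otimes\Delta[m]$, non-degeneracy of $(x,y)$, and the requirement that $y$ fail to factor through $\partial\Delta[m]$. Once this has been set up cleanly, the dimension bound $r\ge p$ follows immediately from the shape of the top $p$-simplex condition, and the remainder of the argument is a purely formal manipulation of anodyne extensions and pushouts.
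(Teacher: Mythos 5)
Your proof is correct and follows essentially the same route as the paper: observe the map is entire (the identity on underlying simplicial sets), pin down the extra markings of $\Delta[p]_t\otimes\Delta[m]$ not yet present in the pushout, show each lies in dimension $r\ge p>N$, and attach the corresponding coproduct of triviality extensions $\Delta[r]\to\Delta[r]_t$. The only cosmetic difference is that the paper gets the dimension bound by noting that any $r$-simplex with $r<p$ factors through $\partial\Delta[p]\times\Delta[m]$ (where $\Delta[p]_t$ and $\Delta[p]$ agree), whereas you derive it by noting that an extra-marked simplex must exhibit the top non-degenerate $p$-simplex at some partition; these are equivalent observations.
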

\begin{proof}
The simplicial sets with marking $( \Delta[p]_t\otimes \partial \Delta[m]) \amalg_{\Delta[p] \otimes\partial \Delta[m] } (\Delta[p]\otimes \Delta[m])$ and $\Delta[p]_t \otimes \Delta[m]$ have the same underlying simplicial set, isomorphic to $\Delta[p] \times \Delta[m]$. We observe that they also have the same set of marked $r$-simplices for $r<p$. Indeed, the set of marked simplices in dimension $r<p$ is already contained in $\partial\Delta[p]\otimes\Delta[m]$. Moreover, for any $r$-simplex $\sigma\colon\Delta[r]\to\Delta[p]_t \otimes \Delta[m]$ for $r\ge p$ we can consider the map of simplicial sets
\[\Delta[r]\to( \Delta[p]_t\otimes \partial \Delta[m]) \aamalg{\Delta[p] \otimes\partial \Delta[m] } (\Delta[p]\otimes \Delta[m]),\]
and realize $\Delta[p]_t \otimes \Delta[m]$ as the pushout along the union of many triviality anodyne extensions:
\[\begin{tikzcd}
\coprod\limits_{\sigma}\Delta[r] \arrow[d]\arrow[r] & \coprod\limits_{\sigma}\Delta[r]_{t}\arrow[d]\\
( \Delta[p]_t\otimes \partial \Delta[m]) \aamalg{\Delta[p] \otimes\partial \Delta[m] } (\Delta[p]\otimes \Delta[m]) \arrow[r]&\Delta[p]_t \otimes \Delta[m]
\end{tikzcd}\]
In particular, the inclusion in question is an $(\infty,N)$-acyclic cofibration, as desired.
\end{proof}

\begin{prop}
\label{pushouttensorsaturation}
Let $N\in\mathbb N\cup\{\infty\}$. For any $m\ge0$ and $\ell\geq -1$ the pushout-tensor
\[(\DeltaThree{\ell}{}{\sharp}\otimes\partial \Delta[m]  ) \aamalg{\DeltaThree{\ell}{}{eq} \otimes \partial \Delta[m]} (\DeltaThree{\ell}{}{eq}\otimes \Delta[m]  ) \to  \DeltaThree{\ell}{}{\sharp}\otimes \Delta[m]\]
of a saturation anodyne extension
$\DeltaThree{\ell}{}{eq}\to \DeltaThree{\ell}{}{\sharp}$
with a boundary inclusion $\partial\Delta[m]\hookrightarrow\Delta[m]$ is an $(\infty,\infty)$-acyclic cofibration.
\end{prop}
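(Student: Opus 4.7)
The plan is to mimic the argument from the proof of \cref{generalsaturation}. Since the saturation extension $\DeltaThree{\ell}{}{eq}\to\DeltaThree{\ell}{}{\sharp}$ is an identity on underlying simplicial sets, its tensor with any marked simplicial set remains entire, so both the source and the target of the pushout-tensor in the statement have the same underlying simplicial set, namely $\Delta[\ell+4]\times\Delta[m]$. Thus the map is an entire monomorphism, hence a cofibration. Following the strategy used in the proof of \cref{generalsaturation} (relying on \cite[Lemma 7.14]{JT}), it suffices to check that this entire cofibration has the left lifting property with respect to every saturated complicial set; being entire then automatically upgrades this to the right lifting property against fibrations between fibrant objects, and the desired conclusion follows.

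To establish the lifting property, the strategy is to express the pushout-tensor as a transfinite composition of pushouts of the general-form saturation extensions $\DeltaThree{\ell'}{\ell''}{eq}\to\DeltaThree{\ell'}{\ell''}{\sharp}$ for various $\ell',\ell''\ge-1$, each of which already has the required lifting property by \cref{generalsaturation}. Concretely, one would enumerate the non-degenerate simplices $(x,y)\colon\Delta[r]\to\Delta[\ell+4]\times\Delta[m]$ that are marked in $\DeltaThree{\ell}{}{\sharp}\otimes\Delta[m]$ but not in the source of the pushout-tensor --- that is, those for which $y$ is surjective onto $\Delta[m]$, every partition $r=p+q$ is cleaved in $\DeltaThree{\ell}{}{\sharp}\otimes\Delta[m]$, and yet some partition fails to be cleaved in $\DeltaThree{\ell}{}{eq}\otimes\Delta[m]$. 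For each such $(x,y)$, one would exhibit an embedding of a suitable $\DeltaThree{\ell'}{\ell''}{eq}\to\DeltaThree{\ell'}{\ell''}{\sharp}$ into $\DeltaThree{\ell}{}{\sharp}\otimes\Delta[m]$ whose $\sharp$-side carries $(x,y)$ as a marked simplex and whose $\eqDelta$-portion is already marked at the preceding stage of the filtration.

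The main obstacle is the combinatorial bookkeeping in the final step. For each failing simplex $(x,y)$ one must locate inside $\Delta[\ell+4]\times\Delta[m]$ a triple of vertices spanning a $\Delta[3]$ with the correct equivalence-pattern structure inherited from the central $\Delta[3]_\sharp$ of $\DeltaThree{\ell}{}{\sharp}$, and determine the parameters $\ell',\ell''$ from how the grid path $(x,y)$ travels before and after the witnessing partition. Ensuring that the filtration can be ordered so that at each step the $\eqDelta$-portion of the incoming extension has already been marked requires an inductive argument, presumably organized by simplex dimension and within each dimension by some measure of how far $(x,y)$ lies from the partition which records its failure in $\DeltaThree{\ell}{}{eq}\otimes\Delta[m]$. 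Once this bookkeeping is set up, the closure of acyclic cofibrations under pushouts and transfinite compositions delivers the conclusion.
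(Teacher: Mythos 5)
Your setup---noting the map is entire, and planning to realise it as a transfinite composition of pushouts of elementary acyclic cofibrations---is the right shape of argument, and your first paragraph is essentially a correct reprise of the strategy in \cref{generalsaturation}. But the load-bearing claim in your second paragraph, that the filtration can be built using pushouts of the general saturation extensions $\DeltaThree{\ell'}{\ell''}{eq}\to\DeltaThree{\ell'}{\ell''}{\sharp}$ alone, does not hold, and this is precisely where all the work of the proposition is concentrated.

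Here is a concrete obstruction. Using \cref{simplicesofS0}, the simplices $\sigma$ marked in $\DeltaThree{\ell}{}{\sharp}\otimes\Delta[m]$ but not in the source have $(\pr_1\sigma)\circ\amalg_1^{h,r-h}=\sigma'\star\sigma''$ with $\sigma''\in\{[01],[03],[12],[23]\}$ an edge of the central $\Delta[3]$. Consider a failing simplex $\sigma$ with $\sigma''=[12]$, last horizontal step $(\ell+2,b)\to(\ell+3,b)$, and a subsequent \emph{vertical} step to $(\ell+3,b+1)$. To absorb $\sigma$ by a pushout along a saturation extension one would need a marking-preserving map $\DeltaThree{\ell'}{\ell''}{eq}\to S_{\mathrm{prev}}$ under which $\sigma$ is the image of some $\gamma_1\star\gamma_2\star\gamma_3$ with $\gamma_2$ a non-$\eqDelta$-marked edge of $\Delta[3]$. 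Monotonicity and the requirement that the images of the $\eqDelta$-edges $[02]$, $[13]$ be marked force the central $\Delta[3]$ to land on a horizontal segment $\{(\ell+1,b),\dots,(\ell+4,b)\}$; every non-degenerate diagonal edge of the product has both projections non-degenerate and hence is never marked in any entire subobject of $\DeltaThree{\ell}{}{\sharp}\otimes\Delta[m]$. But then $\phi(\ell'+4)=(\ell+4,b)$, while the next vertex of $\sigma$ after the degeneracy index is $(\ell+3,b+1)$, and these are incomparable in the product order---so no such $\phi$ exists. Degenerating the central $\Delta[3]$ so that $[13]$ becomes degenerate fares no better: the map then sends the $\eqDelta$-marked simplex built on $[02]$ to $\sigma$ itself (or to another not-yet-marked failing simplex with $\sigma''\in\{[01],[12]\}$), so marking is not preserved.

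The paper's proof handles exactly this: saturation extensions take care of the cases $\sigma''\in\{[03],[23]\}$ (step $S_1\hookrightarrow S_2$, via the map $\varphi$ with a horizontal central $\Delta[3]$), but the cases $\sigma''\in\{[01],[12]\}$ require \emph{complicial thinness extensions} $\Delta^{z+1}[r+1]'\to\Delta^{z+1}[r+1]''$ applied to a carefully chosen $(r+1)$-simplex $\psi$ obtained from $\sigma$ by inserting one extra vertex, together with an inner induction on the parameter $z$ (steps $S_2\hookrightarrow S_3\hookrightarrow\dots\hookrightarrow S_6$) and an outer induction on $\ell$ to seed $S_0\hookrightarrow S_1$. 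So what you describe as ``the main obstacle is the combinatorial bookkeeping'' is not merely bookkeeping: a different kind of building block is required, and the inductive ordering you defer to is where the argument actually lives.
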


The proof of this proposition is postponed until the last section.

We can now prove the theorem.

\begin{proof}[Proof of \cref{pretensorquillen}]

To see that $-\boxtimes S\colon\msset_{(\infty,N)}\to\msset_{(\infty,N)}$ is a left Quillen functor, we observe the following.
\begin{itemize}[leftmargin=*]
    \item Since the underlyng simplicial set of the pretensor of simplicial sets with marking is product of the underlying simplicial sets, if $X\to Y$ is a monomorphism, then $X\boxtimes S\to Y\boxtimes S$ is a monomorphism at the level of underlying simplicial sets. In particular, $-\boxtimes S$ preserves cofibrations.
    \item If $I\to J$ is an elementary anodyne extension, the map $I\boxtimes S\to J\boxtimes S$ can be written as the pushout product
    \[J\boxtimes\Delta[-1]\aamalg{I\boxtimes \Delta[-1]} I \boxtimes S\to J\boxtimes S.\]
    It can then be deduced from \cref{pushoutpretensoranodyneboundary,pushoutpretensoranodynemarking}
    using the compatibility of pushouts and pretensor product with colimits that the functor $-\boxtimes S$ sends all elementary anodyne extensions to weak equivalences.
\end{itemize}
By \cref{leftQuillen}, we then conclude that the functor $-\boxtimes S$ is a left Quillen functor, as desired.
\end{proof}

\subsection{Proof of \cref{pushouttensorsaturation}}

In this subsection we provide the last missing verification.

\begin{rmk}
\label{simplicesofS0}
A non-degenerate $r$-simplex $\sigma\colon \Delta[r]\to \Delta[\ell+4]\times \Delta[m]$ is marked in $\DeltaThree{\ell}{}{eq}\otimes \Delta[m]$ (resp. $\DeltaThree{\ell}{}{\sharp}\otimes \Delta[m]$) if and only if\footnote{This reasoning is inspired by \cite[Lemma 129]{VerityComplicialAMS}.}
\begin{itemize}[leftmargin=*] 
 \item the second projection $\pr_2\sigma$ is degenerate (in particular there exists a maximal $1\leq h\leq r$ such that $\pr_2\sigma(h-1)=\pr_2\sigma(h)$ and we call this $h$ the \emph{degeneracy index} of $\sigma$), and
 \item the partition face $\amalg_1^{h, r-h}$ of the first projection $(\pr_1\sigma)\circ\amalg_1^{h,r-h}$ is marked in $\DeltaThree{\ell}{}{eq}$ (resp. $\DeltaThree{\ell}{}{\sharp}$).
\end{itemize}
Informally speaking, the degeneracy index $h$ of a simplex $\sigma$ is the maximal value for which $\sigma(h)$ is the final point of a horizontal piece in the path that describes the simplex $\sigma$.
\end{rmk}

\begin{proof}[Proof of \cref{pushouttensorsaturation}]
For simplicity of notation, we write
\[S_0:=(\DeltaThree{\ell}{}{\sharp}\otimes\partial \Delta[m]  ) \aamalg{\DeltaThree{\ell}{}{eq} \otimes \partial \Delta[m]} (\DeltaThree{\ell}{}{eq}\otimes \Delta[m]  ).\]
and we show by induction on $l$ that the map $S_0\to\DeltaThree{\ell}{}{\sharp}\otimes \Delta[m]$ is an acyclic cofibration for any $m\geq 0$ and any $\ell\geq -1$.

The simplicial sets with marking $S_0$ and $\DeltaThree{\ell}{}{\sharp}\otimes \Delta[m]$ have the same underlying simplicial set, isomorphic to $  \Delta[\ell+4]\times \Delta[m]$.
By \cref{simplicesofS0}, the $r$-simplices of $\DeltaThree{\ell}{}{\sharp}\otimes \Delta[m]$ that are not marked in $S_0$ are then characterized as follows:
An $r$-simplex is marked in $\DeltaThree{\ell}{}{\sharp}\otimes \Delta[m]$ and not in $S_0$ if and only if
\begin{itemize}[leftmargin=*]
    \item the second projection $\pr_2\sigma$ is surjective, so in particular $r\geq m$ and
    \[(\pr_2\sigma)\circ\amalg_2^{h,r-h}=\id_{\Delta[r-h]}\colon\Delta[r-h]\to\Delta[r-h],\]
    and
    \item the partition face $\amalg_1^{h,r-h}$ of the first component 
     $\pr_1\sigma$ is of the form \[(\pr_1\sigma)\circ\amalg_1^{h,r-h}=\sigma'\star \sigma''\colon \Delta[h-2]\star \Delta[1]\to \Delta[\ell]\star \Delta[3]\]
     with $\sigma''\in\{[01], [03], [12], [23]\}$ and $\sigma'$ non-degenerate.
\end{itemize}

We will now mark all simplices $\sigma$ marked in $\DeltaThree{\ell}{}{\sharp}\otimes \Delta[m]$ and not in $S_0$ by constructing a sequence of entire acyclic cofibrations
\[S_0\hookrightarrow S_1\hookrightarrow S_2\hookrightarrow S_3\hookrightarrow S_4\hookrightarrow S_5\hookrightarrow S_6\cong\DeltaThree{\ell}{}{\sharp}\otimes \Delta[m],\]
which will prove the lemma. More precisely, we will mark
\begin{enumerate}[leftmargin=*]
    \item in $S_1$ exactly all simplices $\sigma$ marked in $\DeltaThree{\ell}{}{\sharp}\otimes \Delta[m]$ and not in $S_0$ that are contained in a copy of $\DeltaThree{\ell-1}{}{\sharp}\otimes \Delta[m]\hookrightarrow\DeltaThree{\ell}{}{\sharp}\otimes \Delta[m]$ by means of induction hypothesis if $\ell>-1$.
     \item in $S_2$ all simplices $\sigma$ marked in $\DeltaThree{\ell}{}{\sharp}\otimes \Delta[m]$ and not in $S_1$ with $\sigma''\in\{[03],[23]\}$ (as well as other simplices) by means of saturation extensions.   The generic simplex $\sigma$ that is being marked in $S_2$ can be depicted as follows.
     \begin{center}
\begin{tikzpicture}
\def\l{0.6cm}
\draw[fill] (0,0) circle (1pt) node (a00){};
\draw[fill] (\l, 0) circle (1pt) node (a10){};
\draw (2*\l, 0) node(a20){$\ldots$};
\draw[fill] (3*\l, 0) circle (1pt) node (a30){};
\draw[fill] (0,\l) circle (1pt) node (a01){};
\draw[fill] (\l, \l) circle (1pt) node (a11){};
\draw (2*\l, \l) node(a21){$\ldots$};
\draw[fill] (3*\l, \l) circle (1pt) node (a31){};
\draw (0,2*\l) node[inner sep=0, outer sep=0] (a02){$\vdots$};
\draw (\l,2*\l) node[inner sep=0, outer sep=0]  (a12){$\vdots$};
\draw (2*\l,2*\l) node[inner sep=0, outer sep=0]  (a22){$\iddots$};
\draw (3*\l,2*\l) node[inner sep=0, outer sep=0]  (a32){$\vdots$};
\draw[fill] (0,3*\l) circle (1pt) node (a03){};
\draw[fill] (\l, 3*\l) circle (1pt) node (a13){};
\draw (2*\l, 3*\l) node(a23){$\ldots$};
\draw[fill] (3*\l, 3*\l) circle (1pt) node (a33){};
\draw[fill] (4*\l, 0) circle (1pt) node (a40){} node[below](l+1){{\scriptsize $\ell+1$}};
\draw[fill] (5*\l, 0) circle (1pt) node (a50){} node[below, yshift=-0.3cm](l+2){{\scriptsize $\ell+2$}};
\draw[fill] (6*\l, 0) circle (1pt) node (a60){} node[below](l+3){{\scriptsize $\ell+3$}};
\draw[fill] (7*\l, 0) circle (1pt) node (a70){} node[below, yshift=-0.3cm](l+4){{\scriptsize $\ell+4$}};
\draw[fill] (4*\l, \l) circle (1pt) node (a41){};
\draw[fill] (5*\l, \l) circle (1pt) node (a51){};
\draw[fill] (6*\l, \l) circle (1pt) node (a61){};
\draw[fill] (7*\l, \l) circle (1pt) node (a71){};
\draw (4*\l,2*\l) node[inner sep=0, outer sep=0] (a42){$\vdots$};
\draw (5*\l,2*\l) node[inner sep=0, outer sep=0]  (a52){$\vdots$};
\draw (6*\l,2*\l) node[inner sep=0, outer sep=0]  (a62){$\vdots$};
\draw (7*\l,2*\l) node[inner sep=0, outer sep=0]  (a72){$\vdots$};
\draw[fill] (4*\l, 3*\l) circle (1pt) node (a43){};
\draw[fill] (5*\l, 3*\l) circle (1pt) node (a53){};
\draw[fill] (6*\l, 3*\l) circle (1pt) node (a63){};
\draw[fill] (7*\l, 3*\l) circle (1pt) node (a73){};
\draw[fill] (0,4*\l) circle (1pt) node (a04){};
\draw[fill] (\l, 4*\l) circle (1pt) node (a14){};
\draw (2*\l, 4*\l) node(a24){$\ldots$};
\draw[fill] (3*\l, 4*\l) circle (1pt) node (a34){};
\draw[fill] (4*\l, 4*\l) circle (1pt) node (a44){};
\draw[fill] (5*\l, 4*\l) circle (1pt) node (a54){};
\draw[fill] (6*\l, 4*\l) circle (1pt) node (a64){};
\draw[fill] (7*\l, 4*\l) circle (1pt) node (a74){};
\draw[fill] (0,5*\l) circle (1pt) node (a05){};
\draw[fill] (\l, 5*\l) circle (1pt) node (a15){};
\draw (2*\l, 5*\l) node(a25){$\ldots$};
\draw[fill] (3*\l, 5*\l) circle (1pt) node (a35){};
\draw[fill] (4*\l, 5*\l) circle (1pt) node (a45){};
\draw[fill] (5*\l, 5*\l) circle (1pt) node (a55){};
\draw[fill] (6*\l, 5*\l) circle (1pt) node (a65){};
\draw[fill] (7*\l, 5*\l) circle (1pt) node (a75){};
\draw (0,6*\l) node[inner sep=0, outer sep=0] (a06){$\vdots$};
\draw (\l,6*\l) node[inner sep=0, outer sep=0]  (a16){$\vdots$};
\draw (2*\l,6*\l) node[inner sep=0, outer sep=0]  (a26){$\iddots$};
\draw (3*\l,6*\l) node[inner sep=0, outer sep=0]  (a36){$\vdots$};
\draw (4*\l,6*\l) node[inner sep=0, outer sep=0] (a46){$\vdots$};
\draw (5*\l,6*\l) node[inner sep=0, outer sep=0]  (a56){$\vdots$};
\draw (6*\l,6*\l) node[inner sep=0, outer sep=0]  (a66){$\vdots$};
\draw (7*\l,6*\l) node[inner sep=0, outer sep=0]  (a76){$\vdots$};
\draw[fill] (0,7*\l) circle (1pt) node (a07){};
\draw[fill] (\l, 7*\l) circle (1pt) node (a17){};
\draw (2*\l, 7*\l) node(a27){$\ldots$};
\draw[fill] (3*\l, 7*\l) circle (1pt) node (a37){};
\draw[fill] (4*\l, 7*\l) circle (1pt) node (a47){};
\draw[fill] (5*\l, 7*\l) circle (1pt) node (a57){};
\draw[fill] (6*\l, 7*\l) circle (1pt) node (a67){};
\draw[fill] (7*\l, 7*\l) circle (1pt) node (a77){};
\draw[decorate,decoration={snake,amplitude=.4mm,segment length=2mm,pre length=0.3cm}, thick, magenta] (a00.center)--(0.5,0.7);
\draw (a33.center)+(-0.5,-0.5) node[inner sep=0, outer sep=0](targ){};
\draw[densely dotted, magenta, thick] (0.5, 0.7)--(targ);
\draw[decorate,decoration={snake,amplitude=.4mm,segment length=2mm,post length=0.3cm}, thick, magenta] (targ)--(a33.center);
\draw[thick, magenta] (a33.center)--(a43.center);
\draw[thick, magenta, bend left=15] (a43.center) to (a73.center);
%\draw[thick, magenta] (a43.center) to (a53.center);
%\draw[thick, magenta] (a53.center) to (a63.center);
%\draw[thick, magenta] (a63.center) to (a73.center);
\draw[thick, magenta] (a73.center) to (a74.center);
\draw[thick, magenta] (a74.center) to (a75.center);
\draw[thick, magenta, densely dotted, bend left=15] (a75.center) to ++(0,0.5*\l);
\draw[thick, magenta, densely dotted, bend left=25](a75.center) ++(0,0.5*\l) to ++(0,0.7*\l);
\draw[thick, magenta, densely dotted, bend left=15](a75.center) ++(0,1.2*\l)to (a77.center);
%\draw[magenta] (a64.south east) node[xshift=0.2cm](labelz){{\scriptsize $\sigma(z)$}};
\draw[magenta] (a73.south east) node[xshift=-0.05cm](labelh){{\scriptsize $\sigma(h)$}};
%braces:
\draw[decorate, decoration= {brace, raise=5pt, amplitude=10pt, mirror}, thick] (a00)+(-0.1cm,0)--node[midway,yshift=-0.8cm, xshift=0.2cm]  {\footnotesize $\ell$ }(a30)--++(0.5cm,0);
\draw[decorate, decoration= {brace, raise=5pt, amplitude=10pt, mirror}, thick] (a70)+(0,-0.1cm)--node[midway,yshift=0.2cm, xshift=0.8cm] 
{\footnotesize $m$ }(a77)--++(0,0.5cm);
\end{tikzpicture}
\end{center}
       \item in $S_3$ exactly all simplices $\sigma$ marked in $\DeltaThree{\ell}{}{\sharp}\otimes \Delta[m]$ and not in $S_2$ with $\sigma''=[12]$ by means of thinness extensions.
        The generic simplex $\sigma$ that is being marked in $S_3$ can be depicted as follows.
\begin{center}
\begin{tikzpicture}
\def\l{0.6cm}
\draw[fill] (0,0) circle (1pt) node (a00){};
\draw[fill] (\l, 0) circle (1pt) node (a10){};
\draw (2*\l, 0) node(a20){$\ldots$};
\draw[fill] (3*\l, 0) circle (1pt) node (a30){};
\draw[fill] (0,\l) circle (1pt) node (a01){};
\draw[fill] (\l, \l) circle (1pt) node (a11){};
\draw (2*\l, \l) node(a21){$\ldots$};
\draw[fill] (3*\l, \l) circle (1pt) node (a31){};
\draw (0,2*\l) node[inner sep=0, outer sep=0] (a02){$\vdots$};
\draw (\l,2*\l) node[inner sep=0, outer sep=0]  (a12){$\vdots$};
\draw (2*\l,2*\l) node[inner sep=0, outer sep=0]  (a22){$\iddots$};
\draw (3*\l,2*\l) node[inner sep=0, outer sep=0]  (a32){$\vdots$};
\draw[fill] (0,3*\l) circle (1pt) node (a03){};
\draw[fill] (\l, 3*\l) circle (1pt) node (a13){};
\draw (2*\l, 3*\l) node(a23){$\ldots$};
\draw[fill] (3*\l, 3*\l) circle (1pt) node (a33){};
\draw[fill] (4*\l, 0) circle (1pt) node (a40){} node[below](l+1){{\scriptsize $\ell+1$}};
\draw[fill] (5*\l, 0) circle (1pt) node (a50){} node[below, yshift=-0.3cm](l+2){{\scriptsize $\ell+2$}};
\draw[fill] (6*\l, 0) circle (1pt) node (a60){} node[below](l+3){{\scriptsize $\ell+3$}};
\draw[fill] (7*\l, 0) circle (1pt) node (a70){} node[below, yshift=-0.3cm](l+4){{\scriptsize $\ell+4$}};
\draw[fill] (4*\l, \l) circle (1pt) node (a41){};
\draw[fill] (5*\l, \l) circle (1pt) node (a51){};
\draw[fill] (6*\l, \l) circle (1pt) node (a61){};
\draw[fill] (7*\l, \l) circle (1pt) node (a71){};
\draw (4*\l,2*\l) node[inner sep=0, outer sep=0] (a42){$\vdots$};
\draw (5*\l,2*\l) node[inner sep=0, outer sep=0]  (a52){$\vdots$};
\draw (6*\l,2*\l) node[inner sep=0, outer sep=0]  (a62){$\vdots$};
\draw (7*\l,2*\l) node[inner sep=0, outer sep=0]  (a72){$\vdots$};
\draw[fill] (4*\l, 3*\l) circle (1pt) node (a43){};
\draw[fill] (5*\l, 3*\l) circle (1pt) node (a53){};
\draw[fill] (6*\l, 3*\l) circle (1pt) node (a63){};
\draw[fill] (7*\l, 3*\l) circle (1pt) node (a73){};
\draw[fill] (0,4*\l) circle (1pt) node (a04){};
\draw[fill] (\l, 4*\l) circle (1pt) node (a14){};
\draw (2*\l, 4*\l) node(a24){$\ldots$};
\draw[fill] (3*\l, 4*\l) circle (1pt) node (a34){};
\draw[fill] (4*\l, 4*\l) circle (1pt) node (a44){};
\draw[fill] (5*\l, 4*\l) circle (1pt) node (a54){};
\draw[fill] (6*\l, 4*\l) circle (1pt) node (a64){};
\draw[fill] (7*\l, 4*\l) circle (1pt) node (a74){};
\draw[fill] (0,5*\l) circle (1pt) node (a05){};
\draw[fill] (\l, 5*\l) circle (1pt) node (a15){};
\draw (2*\l, 5*\l) node(a25){$\ldots$};
\draw[fill] (3*\l, 5*\l) circle (1pt) node (a35){};
\draw[fill] (4*\l, 5*\l) circle (1pt) node (a45){};
\draw[fill] (5*\l, 5*\l) circle (1pt) node (a55){};
\draw[fill] (6*\l, 5*\l) circle (1pt) node (a65){};
\draw[fill] (7*\l, 5*\l) circle (1pt) node (a75){};
\draw (0,6*\l) node[inner sep=0, outer sep=0] (a06){$\vdots$};
\draw (\l,6*\l) node[inner sep=0, outer sep=0]  (a16){$\vdots$};
\draw (2*\l,6*\l) node[inner sep=0, outer sep=0]  (a26){$\iddots$};
\draw (3*\l,6*\l) node[inner sep=0, outer sep=0]  (a36){$\vdots$};
\draw (4*\l,6*\l) node[inner sep=0, outer sep=0] (a46){$\vdots$};
\draw (5*\l,6*\l) node[inner sep=0, outer sep=0]  (a56){$\vdots$};
\draw (6*\l,6*\l) node[inner sep=0, outer sep=0]  (a66){$\vdots$};
\draw (7*\l,6*\l) node[inner sep=0, outer sep=0]  (a76){$\vdots$};
\draw[fill] (0,7*\l) circle (1pt) node (a07){};
\draw[fill] (\l, 7*\l) circle (1pt) node (a17){};
\draw (2*\l, 7*\l) node(a27){$\ldots$};
\draw[fill] (3*\l, 7*\l) circle (1pt) node (a37){};
\draw[fill] (4*\l, 7*\l) circle (1pt) node (a47){};
\draw[fill] (5*\l, 7*\l) circle (1pt) node (a57){};
\draw[fill] (6*\l, 7*\l) circle (1pt) node (a67){};
\draw[fill] (7*\l, 7*\l) circle (1pt) node (a77){};
\draw[decorate,decoration={snake,amplitude=.4mm,segment length=2mm,pre length=0.3cm}, thick, magenta] (a00.center)--(0.5,0.7);
\draw (a33.center)+(-0.5,-0.5) node[inner sep=0, outer sep=0](targ){};
\draw[densely dotted, magenta, thick] (0.5, 0.7)--(targ);
\draw[decorate,decoration={snake,amplitude=.4mm,segment length=2mm,post length=0.3cm}, thick, magenta] (targ)--(a33.center);
\draw[thick, magenta] (a33.center) to [bend right=15](a53.center);
\draw[thick, magenta] (a53.center) to (a63.center);
\draw[thick, magenta] (a63.center) to (a64.center);
\draw[thick, magenta] (a64.center) to (a75.center);
\draw[thick, magenta, densely dotted, bend left=15] (a75.center) to ++(0,0.5*\l);
\draw[thick, magenta, densely dotted, bend left=25](a75.center) ++(0,0.5*\l) to ++(0,0.7*\l);
\draw[thick, magenta, densely dotted, bend left=15](a75.center) ++(0,1.2*\l)to (a77.center);
\draw[magenta] (a64.south east) node[xshift=0.2cm](labelz){{\scriptsize $\sigma(z)$}};
\draw[magenta] (a63.south east) node[xshift=0.15cm](labelh){{\scriptsize $\sigma(h)$}};
%braces:
\draw[decorate, decoration= {brace, raise=5pt, amplitude=10pt, mirror}, thick] (a00)+(-0.1cm,0)--node[midway,yshift=-0.8cm, xshift=0.2cm]  {\footnotesize $\ell$ }(a30)--++(0.5cm,0);
\draw[decorate, decoration= {brace, raise=5pt, amplitude=10pt, mirror}, thick] (a70)+(0,-0.1cm)--node[midway,yshift=0.2cm, xshift=0.8cm] 
{\footnotesize $m$ }(a77)--++(0,0.5cm);
\end{tikzpicture}
\end{center}
        \item in $S_4$ exactly all simplices $\sigma$ marked in $\DeltaThree{\ell}{}{\sharp}\otimes \Delta[m]$ and not in $S_3$ with $\sigma''=[01]$ and $\pr_1\sigma$ hitting at most one of the values $l+3$ and $l+4$ by means of thinness extensions.
                The generic simplex $\sigma$ that is being marked in $S_4$ can be depicted as follows.
\begin{center}
\begin{tikzpicture}
\def\l{0.6cm}
\draw[fill] (0,0) circle (1pt) node (a00){};
\draw[fill] (\l, 0) circle (1pt) node (a10){};
\draw (2*\l, 0) node(a20){$\ldots$};
\draw[fill] (3*\l, 0) circle (1pt) node (a30){};
\draw[fill] (0,\l) circle (1pt) node (a01){};
\draw[fill] (\l, \l) circle (1pt) node (a11){};
\draw (2*\l, \l) node(a21){$\ldots$};
\draw[fill] (3*\l, \l) circle (1pt) node (a31){};
\draw (0,2*\l) node[inner sep=0, outer sep=0] (a02){$\vdots$};
\draw (\l,2*\l) node[inner sep=0, outer sep=0]  (a12){$\vdots$};
\draw (2*\l,2*\l) node[inner sep=0, outer sep=0]  (a22){$\iddots$};
\draw (3*\l,2*\l) node[inner sep=0, outer sep=0]  (a32){$\vdots$};
\draw[fill] (0,3*\l) circle (1pt) node (a03){};
\draw[fill] (\l, 3*\l) circle (1pt) node (a13){};
\draw (2*\l, 3*\l) node(a23){$\ldots$};
\draw[fill] (3*\l, 3*\l) circle (1pt) node (a33){};
\draw[fill] (4*\l, 0) circle (1pt) node (a40){} node[below](l+1){{\scriptsize $\ell+1$}};
\draw[fill] (5*\l, 0) circle (1pt) node (a50){} node[below, yshift=-0.3cm](l+2){{\scriptsize $\ell+2$}};
\draw[fill] (6*\l, 0) circle (1pt) node (a60){} node[below](l+3){{\scriptsize $\ell+3$}};
\draw[fill] (7*\l, 0) circle (1pt) node (a70){} node[below, yshift=-0.3cm](l+4){{\scriptsize $\ell+4$}};
\draw[fill] (4*\l, \l) circle (1pt) node (a41){};
\draw[fill] (5*\l, \l) circle (1pt) node (a51){};
\draw[fill] (6*\l, \l) circle (1pt) node (a61){};
\draw[fill] (7*\l, \l) circle (1pt) node (a71){};
\draw (4*\l,2*\l) node[inner sep=0, outer sep=0] (a42){$\vdots$};
\draw (5*\l,2*\l) node[inner sep=0, outer sep=0]  (a52){$\vdots$};
\draw (6*\l,2*\l) node[inner sep=0, outer sep=0]  (a62){$\vdots$};
\draw (7*\l,2*\l) node[inner sep=0, outer sep=0]  (a72){$\vdots$};
\draw[fill] (4*\l, 3*\l) circle (1pt) node (a43){};
\draw[fill] (5*\l, 3*\l) circle (1pt) node (a53){};
\draw[fill] (6*\l, 3*\l) circle (1pt) node (a63){};
\draw[fill] (7*\l, 3*\l) circle (1pt) node (a73){};
\draw[fill] (0,4*\l) circle (1pt) node (a04){};
\draw[fill] (\l, 4*\l) circle (1pt) node (a14){};
\draw (2*\l, 4*\l) node(a24){$\ldots$};
\draw[fill] (3*\l, 4*\l) circle (1pt) node (a34){};
\draw[fill] (4*\l, 4*\l) circle (1pt) node (a44){};
\draw[fill] (5*\l, 4*\l) circle (1pt) node (a54){};
\draw[fill] (6*\l, 4*\l) circle (1pt) node (a64){};
\draw[fill] (7*\l, 4*\l) circle (1pt) node (a74){};
\draw[fill] (0,5*\l) circle (1pt) node (a05){};
\draw[fill] (\l, 5*\l) circle (1pt) node (a15){};
\draw (2*\l, 5*\l) node(a25){$\ldots$};
\draw[fill] (3*\l, 5*\l) circle (1pt) node (a35){};
\draw[fill] (4*\l, 5*\l) circle (1pt) node (a45){};
\draw[fill] (5*\l, 5*\l) circle (1pt) node (a55){};
\draw[fill] (6*\l, 5*\l) circle (1pt) node (a65){};
\draw[fill] (7*\l, 5*\l) circle (1pt) node (a75){};
\draw (0,6*\l) node[inner sep=0, outer sep=0] (a06){$\vdots$};
\draw (\l,6*\l) node[inner sep=0, outer sep=0]  (a16){$\vdots$};
\draw (2*\l,6*\l) node[inner sep=0, outer sep=0]  (a26){$\iddots$};
\draw (3*\l,6*\l) node[inner sep=0, outer sep=0]  (a36){$\vdots$};
\draw (4*\l,6*\l) node[inner sep=0, outer sep=0] (a46){$\vdots$};
\draw (5*\l,6*\l) node[inner sep=0, outer sep=0]  (a56){$\vdots$};
\draw (6*\l,6*\l) node[inner sep=0, outer sep=0]  (a66){$\vdots$};
\draw (7*\l,6*\l) node[inner sep=0, outer sep=0]  (a76){$\vdots$};
\draw[fill] (0,7*\l) circle (1pt) node (a07){};
\draw[fill] (\l, 7*\l) circle (1pt) node (a17){};
\draw (2*\l, 7*\l) node(a27){$\ldots$};
\draw[fill] (3*\l, 7*\l) circle (1pt) node (a37){};
\draw[fill] (4*\l, 7*\l) circle (1pt) node (a47){};
\draw[fill] (5*\l, 7*\l) circle (1pt) node (a57){};
\draw[fill] (6*\l, 7*\l) circle (1pt) node (a67){};
\draw[fill] (7*\l, 7*\l) circle (1pt) node (a77){};
\draw[decorate,decoration={snake,amplitude=.4mm,segment length=2mm,pre length=0.3cm}, thick, magenta] (a00.center)--(0.5,0.7);
\draw (a33.center)+(-0.5,-0.5) node[inner sep=0, outer sep=0](targ){};
\draw[densely dotted, magenta, thick] (0.5, 0.7)--(targ);
\draw[decorate,decoration={snake,amplitude=.4mm,segment length=2mm,post length=0.3cm}, thick, magenta] (targ)--(a33.center);
\draw[thick, magenta] (a33.center) to (a43.center);
\draw[thick, magenta] (a43.center) to (a53.center);
\draw[thick, magenta] (a53.center) to (a54.center);
\draw[thick, magenta] (a54.center) to (a75.center);
%\draw[thick, magenta] (a64.center) to (a74.center);
\draw[thick, magenta, densely dotted, bend left=15] (a75.center) to ++(0,0.5*\l);
\draw[thick, magenta, densely dotted, bend left=25](a75.center) ++(0,0.5*\l) to ++(0,0.7*\l);
\draw[thick, magenta, densely dotted, bend left=15](a75.center) ++(0,1.2*\l)to (a77.center);
\draw[magenta] (a54.south east) node[xshift=0.2cm](labelz){{\scriptsize $\sigma(z)$}};
\draw[magenta] (a53.south east) node[xshift=0.15cm](labelh){{\scriptsize $\sigma(h)$}};
%braces:
\draw[decorate, decoration= {brace, raise=5pt, amplitude=10pt, mirror}, thick] (a00)+(-0.1cm,0)--node[midway,yshift=-0.8cm, xshift=0.2cm]  {\footnotesize $\ell$ }(a30)--++(0.5cm,0);
\draw[decorate, decoration= {brace, raise=5pt, amplitude=10pt, mirror}, thick] (a70)+(0,-0.1cm)--node[midway,yshift=0.2cm, xshift=0.8cm] 
{\footnotesize $m$ }(a77)--++(0,0.5cm);
\end{tikzpicture}
\end{center}
    \item in $S_5$ exactly all simplices $\sigma$ marked in $\DeltaThree{\ell}{}{\sharp}\otimes \Delta[m]$ and not in $S_4$ with $\sigma''=[01]$ and $\pr_1\sigma$ hitting both $l+3$ and $l+4$, with last appearances of $l+2$ and $l+3$ in consecutive positions by means of thinness extensions. 
            The generic simplex $\sigma$ that is being marked in $S_5$ can be depicted as follows.
\begin{center}
\begin{tikzpicture}
\def\l{0.6cm}
\draw[fill] (0,0) circle (1pt) node (a00){};
\draw[fill] (\l, 0) circle (1pt) node (a10){};
\draw (2*\l, 0) node(a20){$\ldots$};
\draw[fill] (3*\l, 0) circle (1pt) node (a30){};
\draw[fill] (0,\l) circle (1pt) node (a01){};
\draw[fill] (\l, \l) circle (1pt) node (a11){};
\draw (2*\l, \l) node(a21){$\ldots$};
\draw[fill] (3*\l, \l) circle (1pt) node (a31){};
\draw (0,2*\l) node[inner sep=0, outer sep=0] (a02){$\vdots$};
\draw (\l,2*\l) node[inner sep=0, outer sep=0]  (a12){$\vdots$};
\draw (2*\l,2*\l) node[inner sep=0, outer sep=0]  (a22){$\iddots$};
\draw (3*\l,2*\l) node[inner sep=0, outer sep=0]  (a32){$\vdots$};
\draw[fill] (0,3*\l) circle (1pt) node (a03){};
\draw[fill] (\l, 3*\l) circle (1pt) node (a13){};
\draw (2*\l, 3*\l) node(a23){$\ldots$};
\draw[fill] (3*\l, 3*\l) circle (1pt) node (a33){};
\draw[fill] (4*\l, 0) circle (1pt) node (a40){} node[below](l+1){{\scriptsize $\ell+1$}};
\draw[fill] (5*\l, 0) circle (1pt) node (a50){} node[below, yshift=-0.3cm](l+2){{\scriptsize $\ell+2$}};
\draw[fill] (6*\l, 0) circle (1pt) node (a60){} node[below](l+3){{\scriptsize $\ell+3$}};
\draw[fill] (7*\l, 0) circle (1pt) node (a70){} node[below, yshift=-0.3cm](l+4){{\scriptsize $\ell+4$}};
\draw[fill] (4*\l, \l) circle (1pt) node (a41){};
\draw[fill] (5*\l, \l) circle (1pt) node (a51){};
\draw[fill] (6*\l, \l) circle (1pt) node (a61){};
\draw[fill] (7*\l, \l) circle (1pt) node (a71){};
\draw (4*\l,2*\l) node[inner sep=0, outer sep=0] (a42){$\vdots$};
\draw (5*\l,2*\l) node[inner sep=0, outer sep=0]  (a52){$\vdots$};
\draw (6*\l,2*\l) node[inner sep=0, outer sep=0]  (a62){$\vdots$};
\draw (7*\l,2*\l) node[inner sep=0, outer sep=0]  (a72){$\vdots$};
\draw[fill] (4*\l, 3*\l) circle (1pt) node (a43){};
\draw[fill] (5*\l, 3*\l) circle (1pt) node (a53){};
\draw[fill] (6*\l, 3*\l) circle (1pt) node (a63){};
\draw[fill] (7*\l, 3*\l) circle (1pt) node (a73){};
\draw[fill] (0,4*\l) circle (1pt) node (a04){};
\draw[fill] (\l, 4*\l) circle (1pt) node (a14){};
\draw (2*\l, 4*\l) node(a24){$\ldots$};
\draw[fill] (3*\l, 4*\l) circle (1pt) node (a34){};
\draw[fill] (4*\l, 4*\l) circle (1pt) node (a44){};
\draw[fill] (5*\l, 4*\l) circle (1pt) node (a54){};
\draw[fill] (6*\l, 4*\l) circle (1pt) node (a64){};
\draw[fill] (7*\l, 4*\l) circle (1pt) node (a74){};
\draw[fill] (0,5*\l) circle (1pt) node (a05){};
\draw[fill] (\l, 5*\l) circle (1pt) node (a15){};
\draw (2*\l, 5*\l) node(a25){$\ldots$};
\draw[fill] (3*\l, 5*\l) circle (1pt) node (a35){};
\draw[fill] (4*\l, 5*\l) circle (1pt) node (a45){};
\draw[fill] (5*\l, 5*\l) circle (1pt) node (a55){};
\draw[fill] (6*\l, 5*\l) circle (1pt) node (a65){};
\draw[fill] (7*\l, 5*\l) circle (1pt) node (a75){};
\begin{scope}[yshift=\l]
\draw (0,6*\l) node[inner sep=0, outer sep=0] (a06){$\vdots$};
\draw (\l,6*\l) node[inner sep=0, outer sep=0]  (a16){$\vdots$};
\draw (2*\l,6*\l) node[inner sep=0, outer sep=0]  (a26){$\iddots$};
\draw (3*\l,6*\l) node[inner sep=0, outer sep=0]  (a36){$\vdots$};
\draw (4*\l,6*\l) node[inner sep=0, outer sep=0] (a46){$\vdots$};
\draw (5*\l,6*\l) node[inner sep=0, outer sep=0]  (a56){$\vdots$};
\draw (6*\l,6*\l) node[inner sep=0, outer sep=0]  (a66){$\vdots$};
\draw (7*\l,6*\l) node[inner sep=0, outer sep=0]  (a76){$\vdots$};
\draw[fill] (0,7*\l) circle (1pt) node (a07){};
\draw[fill] (\l, 7*\l) circle (1pt) node (a17){};
\draw (2*\l, 7*\l) node(a27){$\ldots$};
\draw[fill] (3*\l, 7*\l) circle (1pt) node (a37){};
\draw[fill] (4*\l, 7*\l) circle (1pt) node (a47){};
\draw[fill] (5*\l, 7*\l) circle (1pt) node (a57){};
\draw[fill] (6*\l, 7*\l) circle (1pt) node (a67){};
\draw[fill] (7*\l, 7*\l) circle (1pt) node (a77){};
\end{scope}
\draw[fill] (0,6*\l) circle (1pt) node (b06){};
\draw[fill] (\l, 6*\l) circle (1pt) node (b16){};
\draw (2*\l, 6*\l) node(b26){$\ldots$};
\draw[fill] (3*\l, 6*\l) circle (1pt) node (b36){};
\draw[fill] (4*\l, 6*\l) circle (1pt) node (b46){};
\draw[fill] (5*\l, 6*\l) circle (1pt) node (b56){};
\draw[fill] (6*\l, 6*\l) circle (1pt) node (b66){};
\draw[fill] (7*\l, 6*\l) circle (1pt) node (b76){};
\draw[decorate,decoration={snake,amplitude=.4mm,segment length=2mm,pre length=0.3cm}, thick, magenta] (a00.center)--(0.5,0.7);
\draw (a33.center)+(-0.5,-0.5) node[inner sep=0, outer sep=0](targ){};
\draw[densely dotted, magenta, thick] (0.5, 0.7)--(targ);
\draw[decorate,decoration={snake,amplitude=.4mm,segment length=2mm,post length=0.3cm}, thick, magenta] (targ)--(a33.center);
\draw[thick, magenta] (a33.center) to (a43.center);
\draw[thick, magenta] (a43.center) to (a53.center);
\draw[thick, magenta] (a53.center) to (a54.center);
\draw[thick, magenta] (a54.center) to (a65.center);
\draw[thick, magenta] (a65.center) to (b76.center);
%\draw[thick, magenta] (a64.center) to (a74.center);
\draw[thick, magenta, densely dotted, bend left=15] (b76.center) to ++(0,0.5*\l);
\draw[thick, magenta, densely dotted, bend left=25](b76.center) ++(0,0.5*\l) to ++(0,0.7*\l);
\draw[thick, magenta, densely dotted, bend left=15](b76.center) ++(0,1.2*\l)to (a77.center);
\draw[magenta] (a65.south east) node[xshift=0.2cm](labelz){{\scriptsize $\sigma(z)$}};
\draw[magenta] (a53.south east) node[xshift=0.15cm](labelh){{\scriptsize $\sigma(h)$}};
%braces:
\draw[decorate, decoration= {brace, raise=5pt, amplitude=10pt, mirror}, thick] (a00)+(-0.1cm,0)--node[midway,yshift=-0.8cm, xshift=0.2cm]  {\footnotesize $\ell$ }(a30)--++(0.5cm,0);
\draw[decorate, decoration= {brace, raise=5pt, amplitude=10pt, mirror}, thick] (a70)+(0,-0.1cm)--node[midway,yshift=0.2cm, xshift=0.8cm] 
{\footnotesize $m$ }(a77)--++(0,0.5cm);
\end{tikzpicture}
\end{center}
        \item in $S_6$ exactly all simplices $\sigma$ marked in $\DeltaThree{\ell}{}{\sharp}\otimes \Delta[m]$ and not in $S_5$ (which in particular have $\sigma''=[01]$ and $\pr_1\sigma$ hitting both $\ell+3$ and $\ell+4$, with last appearances of $\ell+2$ and $\ell+3$ not in consecutive positions) by means of thinness extensions.
                The generic simplex $\sigma$ that is being marked in $S_3$ can be depicted as follows.
\begin{center}
\begin{tikzpicture}
\def\l{0.6cm}
\draw[fill] (0,0) circle (1pt) node (a00){};
\draw[fill] (\l, 0) circle (1pt) node (a10){};
\draw (2*\l, 0) node(a20){$\ldots$};
\draw[fill] (3*\l, 0) circle (1pt) node (a30){};
\draw[fill] (0,\l) circle (1pt) node (a01){};
\draw[fill] (\l, \l) circle (1pt) node (a11){};
\draw (2*\l, \l) node(a21){$\ldots$};
\draw[fill] (3*\l, \l) circle (1pt) node (a31){};
\draw (0,2*\l) node[inner sep=0, outer sep=0] (a02){$\vdots$};
\draw (\l,2*\l) node[inner sep=0, outer sep=0]  (a12){$\vdots$};
\draw (2*\l,2*\l) node[inner sep=0, outer sep=0]  (a22){$\iddots$};
\draw (3*\l,2*\l) node[inner sep=0, outer sep=0]  (a32){$\vdots$};
\draw[fill] (0,3*\l) circle (1pt) node (a03){};
\draw[fill] (\l, 3*\l) circle (1pt) node (a13){};
\draw (2*\l, 3*\l) node(a23){$\ldots$};
\draw[fill] (3*\l, 3*\l) circle (1pt) node (a33){};
\draw[fill] (4*\l, 0) circle (1pt) node (a40){} node[below](l+1){{\scriptsize $\ell+1$}};
\draw[fill] (5*\l, 0) circle (1pt) node (a50){} node[below, yshift=-0.3cm](l+2){{\scriptsize $\ell+2$}};
\draw[fill] (6*\l, 0) circle (1pt) node (a60){} node[below](l+3){{\scriptsize $\ell+3$}};
\draw[fill] (7*\l, 0) circle (1pt) node (a70){} node[below, yshift=-0.3cm](l+4){{\scriptsize $\ell+4$}};
\draw[fill] (4*\l, \l) circle (1pt) node (a41){};
\draw[fill] (5*\l, \l) circle (1pt) node (a51){};
\draw[fill] (6*\l, \l) circle (1pt) node (a61){};
\draw[fill] (7*\l, \l) circle (1pt) node (a71){};
\draw (4*\l,2*\l) node[inner sep=0, outer sep=0] (a42){$\vdots$};
\draw (5*\l,2*\l) node[inner sep=0, outer sep=0]  (a52){$\vdots$};
\draw (6*\l,2*\l) node[inner sep=0, outer sep=0]  (a62){$\vdots$};
\draw (7*\l,2*\l) node[inner sep=0, outer sep=0]  (a72){$\vdots$};
\draw[fill] (4*\l, 3*\l) circle (1pt) node (a43){};
\draw[fill] (5*\l, 3*\l) circle (1pt) node (a53){};
\draw[fill] (6*\l, 3*\l) circle (1pt) node (a63){};
\draw[fill] (7*\l, 3*\l) circle (1pt) node (a73){};
\draw[fill] (0,4*\l) circle (1pt) node (a04){};
\draw[fill] (\l, 4*\l) circle (1pt) node (a14){};
\draw (2*\l, 4*\l) node(a24){$\ldots$};
\draw[fill] (3*\l, 4*\l) circle (1pt) node (a34){};
\draw[fill] (4*\l, 4*\l) circle (1pt) node (a44){};
\draw[fill] (5*\l, 4*\l) circle (1pt) node (a54){};
\draw[fill] (6*\l, 4*\l) circle (1pt) node (a64){};
\draw[fill] (7*\l, 4*\l) circle (1pt) node (a74){};
\draw[fill] (0,5*\l) circle (1pt) node (a05){};
\draw[fill] (\l, 5*\l) circle (1pt) node (a15){};
\draw (2*\l, 5*\l) node(a25){$\ldots$};
\draw[fill] (3*\l, 5*\l) circle (1pt) node (a35){};
\draw[fill] (4*\l, 5*\l) circle (1pt) node (a45){};
\draw[fill] (5*\l, 5*\l) circle (1pt) node (a55){};
\draw[fill] (6*\l, 5*\l) circle (1pt) node (a65){};
\draw[fill] (7*\l, 5*\l) circle (1pt) node (a75){};
\begin{scope}[yshift=2*\l]
\draw (0,6*\l) node[inner sep=0, outer sep=0] (a06){$\vdots$};
\draw (\l,6*\l) node[inner sep=0, outer sep=0]  (a16){$\vdots$};
\draw (2*\l,6*\l) node[inner sep=0, outer sep=0]  (a26){$\iddots$};
\draw (3*\l,6*\l) node[inner sep=0, outer sep=0]  (a36){$\vdots$};
\draw (4*\l,6*\l) node[inner sep=0, outer sep=0] (a46){$\vdots$};
\draw (5*\l,6*\l) node[inner sep=0, outer sep=0]  (a56){$\vdots$};
\draw (6*\l,6*\l) node[inner sep=0, outer sep=0]  (a66){$\vdots$};
\draw (7*\l,6*\l) node[inner sep=0, outer sep=0]  (a76){$\vdots$};
\draw[fill] (0,7*\l) circle (1pt) node (a07){};
\draw[fill] (\l, 7*\l) circle (1pt) node (a17){};
\draw (2*\l, 7*\l) node(a27){$\ldots$};
\draw[fill] (3*\l, 7*\l) circle (1pt) node (a37){};
\draw[fill] (4*\l, 7*\l) circle (1pt) node (a47){};
\draw[fill] (5*\l, 7*\l) circle (1pt) node (a57){};
\draw[fill] (6*\l, 7*\l) circle (1pt) node (a67){};
\draw[fill] (7*\l, 7*\l) circle (1pt) node (a77){};
\end{scope}
\draw[fill] (0,6*\l) circle (1pt) node (b06){};
\draw[fill] (\l, 6*\l) circle (1pt) node (b16){};
\draw (2*\l, 6*\l) node(b26){$\ldots$};
\draw[fill] (3*\l, 6*\l) circle (1pt) node (b36){};
\draw[fill] (4*\l, 6*\l) circle (1pt) node (b46){};
\draw[fill] (5*\l, 6*\l) circle (1pt) node (b56){};
\draw[fill] (6*\l, 6*\l) circle (1pt) node (b66){};
\draw[fill] (7*\l, 6*\l) circle (1pt) node (b76){};
\draw[fill] (0,7*\l) circle (1pt) node (b07){};
\draw[fill] (\l, 7*\l) circle (1pt) node (b17){};
\draw (2*\l, 7*\l) node(b27){$\ldots$};
\draw[fill] (3*\l, 7*\l) circle (1pt) node (b37){};
\draw[fill] (4*\l, 7*\l) circle (1pt) node (b47){};
\draw[fill] (5*\l, 7*\l) circle (1pt) node (b57){};
\draw[fill] (6*\l, 7*\l) circle (1pt) node (b67){};
\draw[fill] (7*\l, 7*\l) circle (1pt) node (b77){};
\draw[decorate,decoration={snake,amplitude=.4mm,segment length=2mm,pre length=0.3cm}, thick, magenta] (a00.center)--(0.5,0.7);
\draw (a33.center)+(-0.5,-0.5) node[inner sep=0, outer sep=0](targ){};
\draw[densely dotted, magenta, thick] (0.5, 0.7)--(targ);
\draw[decorate,decoration={snake,amplitude=.4mm,segment length=2mm,post length=0.3cm}, thick, magenta] (targ)--(a33.center);
\draw[thick, magenta] (a33.center) to (a43.center);
\draw[thick, magenta] (a43.center) to (a53.center);
\draw[thick, magenta] (a53.center) to (a54.center);
\draw[thick, magenta] (a54.center) to (a65.center);
\draw[thick, magenta] (a65.center) to (b66.center);
\draw[thick, magenta] (b66.center) to (b77.center);
%\draw[thick, magenta] (a64.center) to (a74.center);
\draw[thick, magenta, densely dotted, bend left=15] (b77.center) to ++(0,0.5*\l);
\draw[thick, magenta, densely dotted, bend left=25](b77.center) ++(0,0.5*\l) to ++(0,0.7*\l);
\draw[thick, magenta, densely dotted, bend left=15](b77.center) ++(0,1.2*\l)to (a77.center);
\draw[magenta] (b66.south east) node[xshift=0.2cm](labelz){{\scriptsize $\sigma(z)$}};
\draw[magenta] (a54.south east) node[xshift=0.2cm](labelz){{\scriptsize $\sigma(w)$}};
\draw[magenta] (a53.south east) node[xshift=0.15cm](labelh){{\scriptsize $\sigma(h)$}};
%braces:
\draw[decorate, decoration= {brace, raise=5pt, amplitude=10pt, mirror}, thick] (a00)+(-0.1cm,0)--node[midway,yshift=-0.8cm, xshift=0.2cm]  {\footnotesize $\ell$ }(a30)--++(0.5cm,0);
\draw[decorate, decoration= {brace, raise=5pt, amplitude=10pt, mirror}, thick] (a70)+(0,-0.1cm)--node[midway,yshift=0.2cm, xshift=0.8cm] 
{\footnotesize $m$ }(a77)--++(0,0.5cm);
\end{tikzpicture}
\end{center}
        \end{enumerate}
        We now proceed to explaining how to build the desired filtrations.
\begin{enumerate}[leftmargin=*]
    \item For $\ell=-1$ we set $S_1=S_0$, and for $\ell>-1$ we will obtain $S_1$ from $S_0$ by marking exactly all simplices $\sigma$ marked in $\DeltaThree{\ell}{}{\sharp}\otimes \Delta[m]$ and not in $S_0$ that are contained in a copy of $\DeltaThree{\ell-1}{}{\sharp}\otimes \Delta[m]\hookrightarrow\DeltaThree{\ell}{}{\sharp}\otimes \Delta[m]$.
    For each $0\leq i\leq \ell$, we consider the map of simplicial sets with marking $\DeltaThree{\ell-1}{}{eq}\otimes \Delta[m]\to S_0$ induced by the $i$-th face,
    and we can then express the inclusions $S_0\hookrightarrow S_1$ as the pushout with a disjoint union of the inclusion
    \[\DeltaThree{\ell-1}{}{\sharp}\otimes\partial \Delta[m]   \aamalg{\DeltaThree{\ell-1}{}{eq} \otimes \partial \Delta[m]} \DeltaThree{\ell-1}{}{eq}\otimes \Delta[m] \hookrightarrow\DeltaThree{\ell-1}{}{\sharp}\otimes \Delta[m]\]
    which are acyclic cofibrations given by the induction hypothesis:
   \[
\begin{tikzcd}[column sep=small]
\coprod\limits_{i\in [l]} \DeltaThree{\ell-1}{}{\sharp}\otimes\partial \Delta[m]   \aamalg{\DeltaThree{\ell-1}{}{eq} \otimes \partial \Delta[m]} \DeltaThree{\ell-1}{}{eq}\otimes \Delta[m] \arrow[r]\arrow[d]& \coprod\limits_{i\in [l]} \DeltaThree{\ell-1}{}{\sharp}\otimes \Delta[m]\arrow[d]\\
S_0\arrow[r]& S_1.
\end{tikzcd}
\]
In particular, $S_0\hookrightarrow S_1$ is an acyclic cofibration.
Moreover, we have an induced inclusion
\[S_1\hookrightarrow\DeltaThree{\ell}{}{\sharp}\otimes \Delta[m].\]
\item We obtain $S_2$ from $S_1$ by marking in particular for $m \leq r \leq \ell+4+m$ all $r$-simplices $\sigma$ marked in $\DeltaThree{\ell}{}{\sharp}\otimes \Delta[m]$ and not in $S_1$ with $\sigma''\in\{[03], [23]\}$, as well as some additional simplices. 
For any $m\leq r\leq \ell+4+m$ and any
\[\vec b:=\left(b_{0}\leq \ldots \leq b_{\ell}\right)\]
an increasing sequence in $[0,\ell+4+m-r]$,
we argue that the simplicial map
\[\varphi\colon\Delta[l]\star \Delta[3]\star \Delta[r-\ell-5]\to \Delta[\ell+4] \times \Delta[m]\] defined by the formula 
\[i\mapsto
\left\{\begin{array}{ll}
(i, b_i) & \mbox{ if }0\leq i \leq {\ell}\\
    (i,\ell+4+m-r) & \mbox{ if }\ell+1\leq i \leq \ell+4\\
     (\ell+4, m-r+i)&  \mbox{ if }\ell+5\leq i\leq r\\
\end{array}
\right.
\]
is in particular a map of simplicial sets with marking $\DeltaThree{\ell}{r-\ell-5}{eq}\to S_1$.

To see this, we suppose that
\[\gamma_1\star \gamma_2\star \gamma_3\colon \Delta[r_1]\star \Delta[r_2]\star \Delta[r_3]\to \Delta[\ell]\star \Delta[3]\star \Delta[r-\ell-5]\]
is a generic marked and non-degenerate $(r_1+1+r_2+1+r_3)$-simplex of $\Delta[\ell]\star \Delta[3]\star \Delta[r-\ell-5]$ and we prove that the $(r_1+1+r_2+1+r_3)$-simplex of $\Delta[\ell+4]\times\Delta[m]$ defined by the composite of maps of simplicial sets
\[\Delta[r_1]\star \Delta[r_2]\star \Delta[r_3]\xrightarrow{\gamma_1\star\gamma_2\star\gamma_3} \Delta[\ell]\star \Delta[3]\star \Delta[r-\ell-5]\xrightarrow{\varphi} \Delta[\ell+4]\times\Delta[m]\] 
is marked in $\DeltaThree{\ell}{}{eq}\otimes \Delta[m]$. Since $\gamma_1\star\gamma_2\star\gamma_3$ is marked, one amongst the $\gamma_i$'s must be marked, and since moreover $\gamma_1\star\gamma_2\star\gamma_3$ is non-degenerate the simplex $\gamma_2$ must be marked in $\Delta[3]_{eq}$. By construction, the degeneracy index of the composite $\varphi\circ(\gamma_1\star\gamma_2\star\gamma_3)$
is $r_3+1$. Moreover, we see that the partition face $\amalg_1^{r_1+1+r_2, r_3+1}$ of the first component of $\varphi\circ(\gamma_1\star\gamma_2\star\gamma_3)$ is of the form
\[
\Delta[r_1]\star \Delta[r_2] \xrightarrow{\gamma_1\star \gamma_2} \Delta[\ell]\star \Delta[3]\to 
\DeltaThree{\ell}{}{eq}
\]
and it is marked because it is the join of the marked simplex $\gamma_2\colon\Delta[r_2]\to\Delta[3]_{eq}$ with another simplex of the form $\Delta[r_1]\to\Delta[\ell]$. This proves that the simplicial map $\varphi$ does indeed preserve the marking.

We then define the inclusion $S_1\hookrightarrow S_2$ as the pushout with the union of a family of saturation extensions (which are acyclic cofibrations by \cref{generalsaturation}) of the form $\DeltaThree{\ell}{r-\ell-5}{eq}\to\DeltaThree{\ell}{r-\ell-5}{\sharp}$:
\[
\begin{tikzcd}
\coprod\limits_{r}\coprod\limits_{\vec b}\DeltaThree{\ell}{r-\ell-5}{eq} \arrow[r]\arrow[d]& \coprod\limits_{r}\coprod\limits_{\vec b}\DeltaThree{\ell}{r-\ell-5}{\sharp}\arrow[d]\\
S_1 \arrow[r]& S_2.
\end{tikzcd}
\]
In particular, $S_1\hookrightarrow S_2$ is an acyclic cofibration and we have added all simplices $\sigma$ marked in $\DeltaThree{\ell}{}{\sharp}\otimes \Delta[m]$ and not in $S_1$ with $\sigma''\in\{[03], [23]\}$.
Moreover, with a reasoning similar to the one producing the map $\DeltaThree{\ell}{r-\ell-5}{eq}\to S_1$, one can show that there is an induced map
\[S_2\hookrightarrow\DeltaThree{\ell}{}{\sharp}\otimes \Delta[m].\]
    \item We obtain $S_3$ from $S_2$ by marking for $m \leq r \leq \ell+4+m$ all $r$-simplices $\sigma$ marked in $\DeltaThree{\ell}{}{\sharp}\otimes \Delta[m]$ and not in $S_2$ with $\sigma''=[12]$.
    For any such $\sigma$ in question there is a degeneracy index $h>1$ and a unique maximal $h\leq z\leq r$ so that $\pr_1\sigma(z)=\ell+3$. In particular, $r-m\leq z\leq r$.
    The new markings will be added by constructing a sequence of acyclic cofibrations
    \[S_2=:S_2^{(0)}\hookrightarrow S_2^{(1)}\hookrightarrow\dots\hookrightarrow S_2^{(z)}\hookrightarrow S_2^{(z+1)}\hookrightarrow\dots\hookrightarrow S_2^{(\ell+4+m)}=:S_3\]
    such that $S_2^{(z)}$ contains all missing markings for simplices of a given $z$.
For any $\sigma$ with a given $z$, the simplicial map \[\psi\colon\Delta[r+1]\to\Delta[\ell+4]\times \Delta[m]\]
defined by the formula
\[i\mapsto
\left\{\begin{array}{ll}
    \sigma(i) & \mbox{ if }0\leq i\leq z \\
     (\ell+4, m-r+z)&  \mbox{ if }i=z+1\\
          \sigma(i-1)&  \mbox{ if }z+1<i\leq r+1
\end{array}
\right.
\]
is in particular a map of simplicial sets with marking $\Delta^{z+1}[r+1]'\to S_2^{(z-1)}$.

To see this, we consider a non-degenerate marked $s$-simplex $\tau\colon\Delta[s]\to \Delta[r+1]$ of $\Delta^{z+1}[r+1]'$, and we prove that the $s$-simplex of $\Delta[\ell+4]\times\Delta[m]$ defined by the composite of map of simplicial sets
\[\psi\circ\tau\colon \Delta[s]\xrightarrow{\tau}\Delta[r+1]\xrightarrow{\psi} \Delta[\ell+4]\times \Delta[m]\] is marked in $S_2^{(z-1)}$.
\begin{itemize}[leftmargin=*]
    \item If $\tau$ contains $\{z, z+1, z+2\}\cap[r+1]$, by construction the second projection of $\psi\circ\tau$ is degenerate, with degeneracy index being the preimage of $z+1$ in $\Delta[s]$. Moreover, the face partition of the first component of $\psi\circ\tau$ 
    contains
    the edge $[(\ell+3)(\ell+4)]$ in $\Delta[\ell+4]$ and so the simplex $\psi\circ \tau$ is marked in $S_2$. 
    \item If $\tau=d^{z+2}$, by construction the second projection of $\psi\circ\tau$ is not surjective (as it misses the value $m-r+z+1$) and moreover degenerate, with degeneracy index being the preimage of $z+1$ in $\Delta[r]$. Moreover, the face partition of the first component of $\psi\circ\tau$
    hits at least a $1$-dimensional simplex of $\Delta[3]$. In particular, $\psi\circ\tau$ is marked already in $\DeltaThree{\ell}{}{\sharp}\otimes\partial \Delta[m]$.
    \item If $\tau=d^{z}$, we distinguish two cases. If $z=h$, by construction the second projection of $\psi\circ\tau$ is degenerate, with degeneracy index $z=h$. Moreover, the face partition of the first component of $\psi\circ\tau$ contains the edge $[(\ell+2)(\ell+4)]$ in $\Delta[\ell+4]$ and so the simplex $\psi\circ \tau$ is marked in $S_2$. If $h<z$, by construction the second projection of $\psi\circ\tau$ is degenerate, with degeneracy index $h$. Moreover, the face partition of the first component of $\psi\circ\tau$ contains the edge $[(\ell+2)(\ell+3)]$ in $\Delta[\ell+4]$ and in fact the marking of the simplex $\psi\circ \tau$ was added in $S_2^{(z-1)}$.
\end{itemize}
This proves that the simplicial map $\psi$ does indeed preserve the marking.

 We then define the inclusion $S_2^{(z-1)}\hookrightarrow S_2^{(z)}$ as the pushout with several thinness extensions $\Delta^{z+1}[r+1]'\to\Delta^{z+1}[r+1]''$ (as many as $r$-simplices $\sigma$ as $z$ varies): 
\[
\begin{tikzcd}
\coprod\limits_{r}\coprod\limits_{z}\coprod\limits_{\sigma}\Delta^{z+1}[r+1]' \arrow[r]\arrow[d]& \coprod\limits_{r}\coprod\limits_{z}\coprod\limits_{\sigma}\Delta^{z+1}[r+1]''\arrow[d]\\
S_2^{(z-1)} \arrow[r]& S_2^{(z)}.
\end{tikzcd}
\]
In particular $S_2^{(z-1)}\hookrightarrow S_2^{(z)}$ is an acyclic cofibration. 
Moreover, by construction there is an induced map
\[S_2^{(z)}\hookrightarrow \DeltaThree{\ell}{}{\sharp}\otimes \Delta[m].\]
 We then set $S_3:= S_2^{(r)}$, so that in particular $S_2\hookrightarrow S_3$ is an acyclic cofibration and we have marked all simplices $\sigma$ marked in $\DeltaThree{\ell}{}{\sharp}\otimes \Delta[m]$ and not in $S_2$ with $\sigma''=[12]$.
Moreover, by construction we have an induced map
\[S_3\hookrightarrow \DeltaThree{\ell}{}{\sharp}\otimes \Delta[m]\]

    \item We obtain $S_4$ from $S_3$ by marking for $m \leq r \leq \ell+4+m$ all $r$-simplices $\sigma$ marked in $\DeltaThree{\ell}{}{\sharp}\otimes \Delta[m]$ and not in $S_3$ with $\sigma''=[01]$ and $\pr_1\sigma$ hitting at most one of the values $\ell+3$ and $\ell+4$.
    For any such $\sigma$ in question there is a unique maximal $h\leq z\leq r$ so that $\pr_1\sigma(z)=\ell+2$. In particular, $r-m\leq z\leq r$. We will add the missing simplices by constructing a sequence of anodyne extensions
    \[S_3=:S_3^{(0)}\hookrightarrow S_3^{(1)}\hookrightarrow\dots\hookrightarrow S_3^{(z-1)}\hookrightarrow S_3^{(z)}\hookrightarrow\dots \hookrightarrow S_3^{(\ell+4+m)}=:S_4\]
    such that $S_3^{(z)}$ contains all missing simplices for a given $z$.
For any $\sigma$ with a given $z$, the simplicial map \[\psi\colon\Delta[r+1]\to\Delta[\ell+4]\times \Delta[m]\]
defined by the formula
\[i\mapsto
\left\{\begin{array}{ll}
    \sigma(i) & \mbox{ if }0\leq i\leq z \\
     (\ell+4, z)&  \mbox{ if }i=z+1\mbox{ and }\pr_1\sigma(z+1)=\ell+4 \mbox{ or }z=r,\\
    (\ell+3, z)&  \mbox{ if }i=z+1\mbox{ and }\pr_1\sigma(z+1)=\ell+3,\\
          \sigma(i-1)&  \mbox{ if }z+1<i\leq r+1
\end{array}
\right.
\]
is in particular a map of simplicial sets with marking $\Delta^{z+1}[r+1]'\to S_3^{(z-1)}$.

To see this, we consider a non-degenerate marked $s$-simplex $\tau\colon\Delta[s]\to \Delta[r+1]$ of $\Delta^{z+1}[r+1]'$, and we prove that the $s$-simplex of $\Delta[\ell+4]\times\Delta[m]$ defined by the composite of maps of simplicial sets
\[\psi\circ\tau\colon \Delta[s]\xrightarrow{\tau}\Delta[r+1]\xrightarrow{\psi} \Delta[\ell+4]\times \Delta[m]\] is marked in $S_3^{(z-1)}$.
\begin{itemize}[leftmargin=*]
    \item If $\tau$ contains $\{z,z+1,z+2\}\cap[r+1]$, by construction the second projection of $\psi\circ\tau$ is degenerate, with degeneracy index being the preimage of $z+1$ in $\Delta[s]$. Moreover, the face partition of the first component of $\psi\circ\tau$ 
    contains
    the edge $[(\ell+2)(\ell+3)]$ or $[(\ell+2)(\ell+4)]$ in $\Delta[\ell+4]$ and so the simplex $\psi\circ \tau$ is marked in $S_3$. 
    \item If $\tau=d^{z+2}$, by construction the second projection of $\psi\circ\tau$ is not surjective (as it misses the value $m-r+z+1$) and moreover degenerate, with degeneracy index being the preimage of $z+1$ in $\Delta[r]$. Moreover, the face partition of the first component of $\psi\circ\tau$
    hits at least a $1$-dimensional simplex of $\Delta[3]$. In particular, $\psi\circ\tau$ is marked already in $\DeltaThree{\ell}{}{\sharp}\otimes\partial \Delta[m]$.
    \item If $\tau=d^{z}$, we distinguish two cases. If $h=z$, by construction the second projection of $\psi\circ\tau$ is degenerate, with degeneracy index $h=z$. Moreover, the face partition of the first component of $\psi\circ\tau$ contains the edge $[(\ell+1)(\ell+3)]$ or $[(\ell+1)(\ell+4)]$ in $\Delta[\ell+4]$ and so the simplex $\psi\circ \tau$ is marked in $S_2$. If $h<z$, by construction the second projection of $\psi\circ\tau$ is degenerate, with degeneracy index $h$. Moreover, the face partition of the first component of $\psi\circ\tau$ contains the edge $[(\ell+1)(\ell+2)]$ in $\Delta[\ell+4]$ and in fact the marking of the simplex $\psi\circ \tau$ was added in $S_3^{(z-1)}$.
\end{itemize}
This proves that the simplicial map $\psi$ does indeed preserve the marking.

 We then define the inclusion $S_3^{(z-1)}\hookrightarrow S_3^{(z)}$ as the pushout with many thinness anodyne extensions $\Delta^{z+1}[r+1]'\to\Delta^{z+1}[r+1]''$ (as many as $r$-simplices $\sigma$ as $z$ varies):
\[
\begin{tikzcd}
\coprod\limits_{r}\coprod\limits_{z}\coprod\limits_{\sigma}\Delta^{z+1}[r+1]' \arrow[r]\arrow[d]& \coprod\limits_{r}\coprod\limits_{z}\coprod\limits_{\sigma}\Delta^{z+1}[r+1]''\arrow[d]\\
S_3^{(z-1)} \arrow[r]& S_3^{(z)}.
\end{tikzcd}
\]
In particular $S_3^{(z-1)}\hookrightarrow S_3^{(z)}$ is an acyclic cofibration.
Moreover, we have an induced map
\[S_4^{(z)}\hookrightarrow\DeltaThree{\ell}{}{\sharp}\otimes \Delta[m]\]
 We then set
 $S_4:=S_3^{(\ell+4+m)}$, so that in particular $S_3\hookrightarrow S_4$ is an acyclic cofibration and we have marked all simplices $\sigma$ marked in $\DeltaThree{\ell}{}{\sharp}\otimes \Delta[m]$ and not in $S_3$ with $\sigma''=[01]$ and $\pr_1\sigma$ hitting at most one of the values $\ell+3$ and $\ell+4$.
Moreover, we have an induced map
\[S_4\hookrightarrow\DeltaThree{\ell}{}{\sharp}\otimes \Delta[m].\]

\item We obtain $S_5$ from $S_4$ by marking for $m \leq r \leq \ell+4+m$ all $r$-simplices $\sigma$ marked in $\DeltaThree{\ell}{}{\sharp}\otimes \Delta[m]$ and not in $S_4$ with $\sigma''=[01]$ and $\pr_1\sigma$ hitting both $\ell+3$ and $\ell+4$, with last appearances of $\ell+2$ and $\ell+3$ in consecutive positions. 
    More precisely, for any such $\sigma$ in question there is a unique maximal $h+1\leq z\leq r-1$ so that $\pr_1\sigma(z)=\ell+3$, and by assumption $\pr_1\sigma(z+1)=\ell+4$ and $\pr_2\sigma(z-1)=\ell+2$. In particular, $r-m+1\leq z\leq \ell+3+m$.
For any $\sigma$ with a given $z$, the simplicial map
\[\psi\colon\Delta[r+1]\to\Delta[\ell+4]\times \Delta[m]\]
defined by the formula
\[i\mapsto
\left\{\begin{array}{ll}
    \sigma(i) & \mbox{ if }0\leq i\leq z \\
     (m-r+z, \ell+4)&  \mbox{ if }i=z+1\\
          \sigma(i-1)&  \mbox{ if }z+1<i\leq r+1
\end{array}
\right.
\]
is in particular a map of simplicial sets with marking $\Delta^{z+1}[r+1]'\to S_4$.

To see this, we consider a non-degenerate marked $s$-simplex $\tau\colon\Delta[s]\to \Delta[r+1]$ of $\Delta^{z+1}[r+1]'$, and we prove that the $s$-simplex of $\Delta[\ell+4]\times\Delta[m]$ defined by the composite of maps of simplicial sets
\[\psi\circ\tau\colon \Delta[s]\xrightarrow{\tau}\Delta[r+1]\xrightarrow{\psi} \Delta[\ell+4]\times \Delta[m]\] is marked in $S_4$.
\begin{itemize}[leftmargin=*]
    \item If $\tau$ contains $\{z,z+1,z+2\}\cap[r+1]$, by construction the second projection of $\psi\circ\tau$ is degenerate, with degeneracy index being the preimage of $z+1$ in $\Delta[s]$. Moreover, the face partition of the first component of $\psi\circ\tau$ 
    contains
    the edge $[(\ell+3)(\ell+4)]$ in $\Delta[\ell+4]$ and so the simplex $\psi\circ \tau$ is marked in $S_2$. 
    \item If $\tau=d^{z+2}$, by construction the second projection of $\psi\circ\tau$ is not surjective (as it misses the value $m-r+z+1$) and moreover degenerate, with degeneracy index being the preimage of $z+1$ in $\Delta[r]$. Moreover, the face partition of the second component of $\psi\circ\tau$ 
    hits at least a $1$-dimensional simplex of $\Delta[3]$. In particular, $\psi\circ\tau$ is marked already in $\DeltaThree{\ell}{}{\sharp}\otimes\partial \Delta[m]$.
    \item If $\tau=d^{z}$, by construction the second projection of $\psi\circ\tau$ is degenerate, with degeneracy index $h$. Moreover, the face partition of the first component of $\psi\circ\tau$ contains the edge $[(\ell+1)(\ell+2)]$ in $\Delta[\ell+4]$ and does not hit $\ell+3$, so the marking of the simplex $\psi\circ \tau$ was added in $S_4$.
\end{itemize}
This proves that the simplicial map $\psi$ does indeed preserve the marking.

 We define the inclusion $S_4\hookrightarrow S_5$ as the pushout with several thinness extensions $\Delta^{z+1}[r+1]'\to\Delta^{z+1}[r+1]''$ (as many as $r$-simplices $\sigma$ as $z$ varies):
\[
\begin{tikzcd}
\coprod\limits_{r}\coprod\limits_{z}\coprod\limits_{\sigma}\Delta^{z+1}[r+1]' \arrow[r]\arrow[d]& \coprod\limits_{r}\coprod\limits_{z}\coprod\limits_{\sigma}\Delta^{z+1}[r+1]''\arrow[d]\\
S_4 \arrow[r]& S_5.
\end{tikzcd}
\]
In particular $S_4\hookrightarrow S_5$ is an acyclic cofibration and we have marked all simplices in $\DeltaThree{\ell}{}{\sharp}\otimes \Delta[m]$ and not in $S_4$ with $\sigma''=[01]$ and $\pr_1\sigma$ hitting both the values $\ell+3$ and $\ell+4$, with last appearances of $\ell+2$ and $\ell+3$ in consecutive positions.
Moreover, we have an induced map
\[S_5\hookrightarrow\DeltaThree{\ell}{}{\sharp}\otimes \Delta[m].\]
 \item We obtain $S_6$ from $S_5$ by marking for $m \leq r \leq \ell+4+m$ all missing $r$-simplices with $\sigma''=[01]$ and $\pr_1\sigma$ hitting both $\ell+3$ and $\ell+4$, with last appearances of $\ell+2$ and $\ell+3$ not in consecutive positions. 
 More precisely, for any such $\sigma$ in question there is a unique maximal $h< z<r$ so that $\pr_1\sigma(z)=\ell+3$.
 In particular, $r-m\leq z\leq \ell+3+m$. We will add them by constructing a sequence of acyclic cofibrations
    \[S_5=:S_5^{(r-m)}\hookrightarrow S_5^{(r-m+1)}\hookrightarrow\dots\hookrightarrow S_5^{(z-1)}\hookrightarrow S_5^{(z)}\hookrightarrow\dots\hookrightarrow S_5^{(\ell+3+m)}=:S_6\]
    such that $S_5^{(z)}$ contains all missing simplices for a given $z$.
For any $\sigma$ with a given $z$, the simplicial map
\[\psi\colon\Delta[r+1]\to\Delta[\ell+4]\times \Delta[m]\]
defined by the formula
\[i\mapsto
\left\{\begin{array}{ll}
    \sigma(i) & \mbox{ if }0\leq i\leq z \\
     (\ell+4, m-r+z)&  \mbox{ if }i=z+1\\
          \sigma(i-1)&  \mbox{ if }z+1<i\leq r+1
\end{array}
\right.
\]
is in particular a map of simplicial sets with marking $\Delta^{z+1}[r+1]'\to S_5^{(z-1)}$.

To see this, we consider a non-degenerate marked $s$-simplex $\tau\colon\Delta[s]\to \Delta[r+1]$ of $\Delta^{z+1}[r+1]'$, and we prove that the $s$-simplex of $\Delta[\ell+4]\times\Delta[m]$ defined by the composite of maps of simplicial sets
\[\psi\circ\tau\colon \Delta[s]\xrightarrow{\tau}\Delta[r+1]\xrightarrow{\psi} \Delta[\ell+4]\times \Delta[m]\] is marked in $S_5^{(z-1)}$.
\begin{itemize}[leftmargin=*]
    \item If $\tau$ contains $\{z, z+1, z+2\}\cap[r+1]$, by construction the second projection of $\psi\circ\tau$ is degenerate, with degeneracy index being the preimage of $z+1$ in $\Delta[s]$. Moreover, the face partition of the first component of $\psi\circ\tau$ 
    contains 
    the edge $[(\ell+3)(\ell+4)]$ in $\Delta[\ell+4]$ and so the simplex $\psi\circ \tau$ is marked in $S_2$. 
    \item If $\tau=d^{z+2}$, by construction the second projection of $\psi\circ\tau$ is not surjective (as it misses the value $m-r+z+1$) and moreover degenerate, with degeneracy index being the preimage of $z+1$ in $\Delta[r]$. Moreover, the face partition of the first component of $\psi\circ\tau$
    hits at least a $1$-dimensional simplex of $\Delta[3]$. In particular, $\psi\circ\tau$ is marked already in $\DeltaThree{\ell}{}{\sharp}\otimes\partial \Delta[m]$.
    \item If $\tau=d^{z}$, we distinguish two cases depending on the value of $w$, being the maximal value for which $\pr_1\sigma(w)=\ell+2$. By assumption, $h\leq w <z-1$. If $w=z-2$, by construction the second projection of $\psi\circ\tau$ is degenerate, with degeneracy index $h$. Moreover, the face partition of the first component of $\psi\circ\tau$ 
    contains the edge $[(\ell+1)(\ell+2)]$ in $\Delta[\ell+4]$ and hits $\ell+2$ and $\ell+3$ in consecutive positions for the last time and so the marking of $\psi\circ \tau$ was added in $S_5$. If $w<z-2$, by construction the second projection of $\psi\circ\tau$ is degenerate, with degeneracy index $h$. Moreover, the face partition of the second component of $\psi\circ\tau$ 
    contains the edge $[(\ell+1)(\ell+2)]$ in $\Delta[\ell+4]$ and in fact the marking of the simplex $\psi\circ \tau$ was added in $S_5^{(z-1)}$.
\end{itemize}
This proves that the simplicial map $\psi$ does indeed preserve the marking.

 We then define the inclusion $S_5^{(z-1)}\hookrightarrow S_5^{(z)}$ as the pushout with several thinness extensions $\Delta^{z+1}[r+1]'\to\Delta^{z+1}[r+1]''$ (as many as $r$-simplices $\sigma$ as $z$ varies):
\[
\begin{tikzcd}
\coprod\limits_{r}\coprod\limits_{z}\coprod\limits_{\sigma}\Delta^{z+1}[r+1]' \arrow[r]\arrow[d]& \coprod\limits_{r}\coprod\limits_{z}\coprod\limits_{\sigma}\Delta^{z+1}[r+1]''\arrow[d]\\
S_5^{(z-1)} \arrow[r]& S_5^{(z)}.
\end{tikzcd}
\]
In particular $S_5^{(z-1)}\hookrightarrow S_5^{(z)}$ is an acyclic cofibration.
Moreover, we have an induced map
\[S_5^{(z)}\hookrightarrow\DeltaThree{\ell}{}{\sharp}\otimes \Delta[m].\]
 We then set %
 $S_6:=S_5^{(\ell+3+m)}$, so that in particular $S_5\hookrightarrow S_6$ is an acyclic cofibration and we have marked all simplices $\sigma$ marked in $\DeltaThree{\ell}{}{\sharp}\otimes \Delta[m]$ and not in $S_5$ with $\sigma''=[01]$ and $\pr_1\sigma$ hitting both the values $\ell+3$ and $\ell+4$, with last appearances of $l+2$ and $l+3$ not in consecutive positions.
In particular, we have an isomorphism
\[S_6\cong\DeltaThree{\ell}{}{\sharp}\otimes \Delta[m].\]
\end{enumerate}
This concludes the proof.
\end{proof}

\bibliographystyle{amsalpha}
\bibliography{ref}
\end{document}